\documentclass[12pt,a4paper,reqno]{amsart}
\allowdisplaybreaks

\usepackage[a4paper,left=3cm,right=3cm]{geometry}
\usepackage[foot]{amsaddr}
\usepackage{amsthm}
\usepackage{amssymb}
\usepackage{amsmath}
\usepackage{mathtools}
\usepackage{amsfonts}
\usepackage{bbm}
\usepackage{xcolor}
\usepackage{enumitem}
\usepackage{mathabx}
\usepackage{tikz-cd}
\usepackage{caption}
\usepackage{ulem}	
\usepackage[abs]{overpic}

\usepackage{tikz}
\usetikzlibrary{intersections,fpu}

\usepackage{hyperref}
\hypersetup{
  colorlinks   = true, 
  urlcolor     = blue, 
  linkcolor    = blue, 
  citecolor   = blue, 
}

\newcommand{\rleq}{\trianglelefteq}
\newcommand{\rgeq}{\trianglerighteq}

\newtheorem{proposition}{Proposition}
\newtheorem{theorem}[proposition]{Theorem}
\newtheorem{lemma}[proposition]{Lemma}
\newtheorem{corollary}[proposition]{Corollary}

\theoremstyle{definition}
\newtheorem{definition}[proposition]{Definition}

\newtheorem{remark}[proposition]{Remark}

\definecolor{lighter}{RGB}{10.0,146.0,35.0}
\definecolor{darker}{RGB}{2.0,100.0,20.0}
\definecolor{sininen}{RGB}{89.5,101.8,238.1}
\definecolor{punainen}{RGB}{227.0,12.0,12.0}

\newcommand{\darker}[1]{\textcolor{darker}{#1}}
\newcommand{\punainen}[1]{\textcolor{punainen}{#1}}
\newcommand{\sininen}[1]{\textcolor{sininen}{#1}}

\newcommand{\mc}[1]{\mathcal{#1}}

\newcommand{\mf}[1]{\mathfrak{#1}}
\newcommand{\mb}[1]{\mathbb{#1}}

\newcommand{\N}{\mathbb N}
\newcommand{\R}{\mathbb R}

\newcommand{\T}{\mathbb T}

\def\<{\langle}
\def\>{\rangle}

\newcommand{\tj}{\vartheta}

\let\originalleft\left
\let\originalright\right
\renewcommand{\left}{\mathopen{}\mathclose\bgroup\originalleft}
\renewcommand{\right}{\aftergroup\egroup\originalright}

\makeatletter
\newcommand*{\saved@uline}{}
\let\saved@uline\uline
\newcommand*{\mathuline}{%
  \mathpalette{\math@uline\saved@uline}%
}
\newcommand*{\math@uline}[3]{%
  \mbox{#1{$#2#3\m@th$}}%
}

\begin{document}

\title{Multivariate majorization\\ of continuous statistical experiments}

\author{Erkka Haapasalo}
\address{Centre for Quantum  Technologies,  National University of Singapore}
\email{cqteth@nus.edu.sg}

\begin{abstract}
We derive sufficient and almost necessary conditions for large sample and catalytic majorization between finite statistical experiments over standard Borel sample spaces. This work generalizes previous results, on one hand, in the bivariate case and, on the other hand, in the multivariate discrete (or, rather, finite) case, i.e.,\ matrix majorization. We derive multivariate generalizations of the bivariate R\'{e}nyi relative entropies and show that inequalities involving these multivariate R\'{e}nyi divergences characterize large-sample and catalytic majorization of finite statistical experiments. As our methods are real-algebraic in nature, this work demonstrates that large deviation techniques are not the only option available to derive conditions for large sample majorization even in the case of more general sample spaces of the experiments. We also show that all general multivariate divergences, i.e.,\ multivariate extensive and monotone maps of finite statistical experiments, can be expressed through barycentres over the set of multivariate R\'{e}nyi divergences. We also show that we may characterize the optimal conversion rate of a statistical experiment into another using the multivariate R\'{e}nyi divergences.
\end{abstract}

\maketitle

\section{Introduction}

{\it Statistical experiments} are basic notions in statistics and information theory. They are collections of information from a phenomenon, e.g.,\ weather, physical experiment, or a medical study. Information on a sample space $(X,\mc A)$, a measurable space, comes in the form of probability measures $p:\mc A\to[0,1]$, so statistical experiments are collections $P=\{p^\tj\}_{\tj\in\Theta}$ of probability measures on some sample space and some fixed collection $\Theta$ of settings. It is natural to say that a statistical experiment $P=\{p^\tj\}_{\tj\in\Theta}$ is more informative than another statistical experiment $Q=\{q^\tj\}_{\tj\in\Theta}$ if there is a stochastic operator $T$ (modeled as a Markov kernel) such that $Tp^\tj=q^\tj$ for all $\tj\in\Theta$ \cite{Blackwell51,Blackwell53,Torgersen91}. Note that the experiment $P$ contains at least the information of $Q$ as we may produce tha data of $Q$ out of $P$ using a single data processing operation. In this situation, we say that $P$ {\it majorizes} $Q$ and denote $P\succeq Q$.

Having many copies of the data of the statistical experiment or an auxiliary or `catalytic' experiment in our disposal makes majorization easier to characterize. To this end, given statistical experiments $P=\{p^\tj\}_{\tj\in\Theta}$ and $Q=\{q^\tj\}_{\tj\in\Theta}$, we denote
$$
P\boxtimes Q:=\big\{p^\tj\otimes q^\tj\big\}_{\tj\in\Theta}
$$
where $p\otimes q$ is the product measure of measures $p$ and $q$ (defined on the product $\sigma$-algebra); we use this notation instead of the more standard $p\times q$ for internal cohesion throughout this work. We say that $P$ majorizes $Q$ {\it in large samples} if $P^{\boxtimes n}\succeq Q^{\boxtimes n}$ whenever $n\in\N$ is large enough. Furthermore, we say that $P$ {\it catalytically majorizes} $Q$ if there exists a catalytic experiment $R$ such that $P\boxtimes Q\succeq Q\boxtimes R$. Large-sample majorization implies catalytic majorization \cite{Duan2005} but the converse is not true \cite{Feng_et_al_2006}. The exact difference between these two modes of majorization is still quite unknown in general though.

In this work we focus on {\it finite} statistical experiments \cite{Heyer1982}. This means that the fixed set $\Theta$ of settings is finite. In the special case $\Theta=\{1,2\}$, majorization is also called {\it relative majorization}. Relative majorization and conditions for large-sample relative majorization were exhaustively studied by Mu {\it et al.}\ in \cite{Mu_et_al_2021}. The methods employed therein are based on large deviation techniques. However, these techniques, although useful in the relative case, do not readily lend themselves for the study of {\it multivariate majorization} where $\Theta=\{1,\ldots,d\}$ for a general $d\geq 2$. Building upon the real-algebraic machinery of \cite{FritzI,FritzII} and the work done in the finite-outcome case in \cite{Farooq_et_al_2024,Verhagen_et_al_2024}, we are now able to fully generalize the results of \cite{Mu_et_al_2021} to the multivariate case. Majorization in the case of finite statistical experiments with finite sample spaces studied in \cite{Farooq_et_al_2024,Verhagen_et_al_2024} is also called {\it matrix majorization} because, in this case, we may view an experiment $P=\big(p^{(1)},\ldots,p^{(d)}\big)$ with finite probability distributions $p^{(k)}:X\to[0,1]$ ($X$ being a finite set) as the matrix $\big(p^{(k)}(x)\big)_{k,x}$ which we multiply with a stochastic matrix modeling a finite Markov kernel \cite{dahl99}. We must point out however, that we have to place a certain boundedness condition on the finite statistical experiments $\{p^{(k)}\}_{k=1}^d$ that we study: all the measures $p^{(k)}$ within the same experiment must belong to the same measure class (i.e.,\ they are mutually dominating) and all the Radon-Nikod\'{y}m derivatives $dp^{(k)}/dp^{(\ell)}$, $k,\ell=1,\ldots,d$, must be essentially bounded (w.r.t.\ the shared measure class of the experiment). However, this same assumption is also made in the relative or bipartite case in \cite{Mu_et_al_2021}. We wish to highlight by this the fact that we obtain more general (multivariate) results with similar restrictions using real-algebraic methods than when using the more standard large-deviation methods.

We find that $P$ majorizes $Q$ in large samples (as well as catalytically) if $D(P)>D(Q)$ for {\it multivariate R\'{e}nyi divergences} $D$ generalizing the bivariate R\'{e}nyi relative entropies \cite{Renyi61}. The set of the multivariate R\'{e}nyi divergences consists of the {\it temperate quantities} $D_{\mathuline{\alpha}}$, together with their certain pointwise limits, defined by a parameter vector $\mathuline{\alpha}=(\alpha_1,\ldots,\alpha_d)\in\R^d$, with $\alpha_1+\cdots+\alpha_d=1$ and either $0\leq\alpha_\ell<1$ for all $\ell\in\{1,\ldots,d\}$ or $\alpha_k>1$ for one $k$ and $\alpha_\ell\leq0$ whenever $\ell\neq k$, through
\begin{align*}
D_{\mathuline{\alpha}}(P)&=\frac{1}{\max_{1\leq\ell\leq d}\alpha_\ell-1}\log{\mb E_{p^{(d)}}\left[\left(\frac{dp^{(1)}}{dp^{(d)}}\right)^{\alpha_1}\cdots\left(\frac{dp^{(d-1)}}{dp^{(d)}}\right)^{\alpha_{d-1}}\right]}\\
&=\frac{1}{\max_{1\leq\ell\leq d}\alpha_\ell-1}\log{\int_X\frac{dp^{(1)}}{dp^{(d)}}(x)^{\alpha_1}\cdots\frac{dp^{(d-1)}}{dp^{(d)}}(x)^{\alpha_{d-1}}\,dp^{(d)}(x)}
\end{align*}
for all statistical experiments $P=\big(p^{(1)},\ldots,p^{(d)}\big)$ over any (standard Borel) sample space $(X,\mc A)$ satisfying the boundedness condition detailed above. Note that $\alpha_d$ does not make an explicit appearance in the above formula, but we still include it in the definition to make the description of the allowed parameter vectors easier to state and understand. It might appear that the index $k=d$ has a special role in the above formula. Note however that since all the probability measures in the same statistical experiment are required to belong to the same measure class, we may transition from Radon-Nikod\'{y}m derivatives w.r.t.\ the final measure in the above formula to densities w.r.t.\ any chosen $p^{(k)}$ in which case the parameter $\alpha_k$ would not make an explicit appearance.

This is a full generalization of Theorem 1 of \cite{Mu_et_al_2021} which states that similar inequalities involving the traditional R\'{e}nyi relative entropies give a sufficient condition for large-sample relative majorization. This characterization of large-sample relative majorization was further used in \cite{Mu_et_al_2021} to derive an exhaustive characterization of all relative entropies. Relative entropies are to be understood as extensive (tensor-additive) maps that satisfy the data processing inequality. Using our multivariate generalization of the characterization of large-sample majorization, we have a complete characterization of all the multivariate divergences using the same functional-analytic methods that were used in \cite{Mu_et_al_2021}. In our case, a multivariate divergence $D$ is a real-valued function on the class of statistical experiments characterized by extensivity ($D(P\boxtimes Q)=D(P)+D(Q)$ for all $P$ and $Q$) and the data processing inequality ($P\succeq Q$ $\Rightarrow$ $D(P)\geq D(Q)$).

We also address the problem of characterizing the optimal rate of transforming a finite statistical experiment $P$ into another experiment $Q$. We will show that, if $P$ contains no repeating probability measure, this optimal rate can be expressed as
$$
r(P\to Q)=\inf_{\mathuline{\alpha}\in A}\frac{D_{\mathuline{\alpha}}(P)}{D_{\mathuline{\alpha}}(Q)}
$$
where the infimum is taken over the allowed region $A$ of parameter vectors $\mathuline{\alpha}$.

We apply a {\it Vergleichsstellensatz} from \cite{FritzII} which characterizes large-sample and catalytic ordering in particular preordered semirings using specific monotone homomorphisms and derivations of the semiring. We demonstrate that these maps give rise to the multivariate R\'{e}nyi divergences discussed above. We are thus able to derive sufficient (and almost necessary) conditions for large-sample and catalytic multivariate majorization in the form of inequalities between the values of these divergences on the two finite statistical experiments we are studying. The real-algebraic methods employed in this work were pioneered by Strassen \cite{strassen1986,strassen1987} when deriving the first sub-cubic matrix multiplication algorithm. This initiated Strassen's theory of asymptotic spectra \cite{strassen1991,strassen1998} resulting in his {\it Positivstellensatz}. The {\it Vergleichsstellens\"{a}tze} of \cite{FritzI,FritzII} can be seen as generalizations of the {\it Positivstellensatz}. Other generalizations can be found in \cite{Vrana2022}. A review of the different modern applications of preordered semirings and asymptotic spectra can be found in \cite{WigZuid}.

One of our main messages is that the real-algebraic methods are equally applicable in the continuous setting as in the finite-outcome setting. Recently a paper \cite{balsubramani2026} was published with closely related and in many cases identical results to the ones derived in this paper. In particular, Theorem 5.1 of \cite{balsubramani2026} is identical to our Theorem \ref{thm:barycentre} characterizing all extensive multivariate divergences as barycentres over the multivariate R\'{e}nyi divergences. We wish to highlight that this same result can be reached using various different techniques. Our approach is to use the real-algebraic machinery of \cite{FritzII} to derive large-sample majorization conditions and, using them and the results of \cite{haapasalo2025}, we arrive at the same characterization as that in Theorem 5.1 of \cite{balsubramani2026}.

This paper is organized as follows: We start with introducing basic notations used throughout in this work in Section \ref{sec:Background} followed with a quick recap of concepts related with preordered semirings in Subsection \ref{subsec:preorderedsemirings}. In this subsection, we also state the {\it Vergleichsstellensatz}, Theorem \ref{thm:Vergleichsstellensatz}, derived in \cite{FritzII} characterizing large-sample and catalytic order through inequalities over the test spectrum of the preordered semiring, i.e.,\ the set of specific monotone semiring homomorphisms and derivations over the preordered semiring. In section \ref{sec:MajSemiring}, we define the majorization semiring relevant for multivariate majorization and characterize the test spectrum of this preordered semiring building upon and generalizing the finite results of \cite{Farooq_et_al_2024,Verhagen_et_al_2024}. We use the {\it Vergleichsstellensatz} together with the characterization of the test spectrum to state the sufficient and almost necessary conditions for large-sample and catalytic majorization in Theorem \ref{thm:LSCatExact}. Using this large-sample result together with the same functional analytic techniques as those used in \cite{Mu_et_al_2021}, we also derive Theorem \ref{thm:barycentre} stating that all multivariate divergences are barycentres over the test spectrum, i.e.,\ the set of multivariate R\'{e}nyi divergences. In Subsection \ref{subsec:rates}, we derive the form for the optimal achievable rate of transforming a finite statistical experiment into another. We end with conclusions and future prospects in Section \ref{sec:concl}.

\section{Background}\label{sec:Background}

We use the following basic notations throughout this work:
\begin{itemize}
\item $\N=\{1,2,3,\ldots\}$
\item $\R_{>0}$: the set of all strictly positive real numbers.
\item $\R_+$: The set of non-negative real numbers seen as a preordered semiring (see the definition of this notion later) with the natural addition, multiplication, and (total) order $\geq$.
\item $\R_+^{\rm op}$: as a set the same as $\R_+$ above but with the opposite order $a\geq^{\rm op}b$ $\Leftrightarrow$ $b\geq a$. $\R_+$ and $\R_+^{\rm op}$ together are often called {\it temperate non-negative reals}.
\item $\T\R_+$: otherwise the same as $\R_+$ above but with the tropical sum $a+'b=\max\{a,b\}$.
\item $\T\R_+^{\rm op}$: the same as $\T\R_+$ above but with the opposite order. $\T\R_+$ and $\T\R_+^{\rm op}$ together are often called as {\it tropical non-negative reals}.
\end{itemize}

\subsection{Preordered semirings and a Vergleichsstellensatz}\label{subsec:preorderedsemirings}

Preordered semirings are essentially otherwise like commutative rings with the exception that the elements of a (preordered) semiring typically do not have additive inverses and there is a preorder which respects the algebraic structure. To formalize this, a preordered semiring is an ordered tuple $(S,0,1,+,\cdot,\rgeq)$ where $S\neq\emptyset$, $(S,0,+)$ and $(S,1,\cdot)$ are commutative semigroups where the multiplication distributes over the addition, and $\rgeq$ is a preorder (a reflexive and transitive binary relation) on $S$ such that
$$
x\rgeq y\ \Rightarrow\ \left\{\begin{array}{l}
x+a\rgeq y+a,\\
xa\rgeq ya,
\end{array}\right.\quad\forall a\in S.
$$
For a preordered semiring $S$ and $x,y\in S$, we denote $x\sim y$ whenever there are $z_1,\ldots,z_n\in S$ such that
$$
x\rgeq z_1\rleq z_2\rgeq\cdots\rleq z_n\rgeq y.
$$
We say that a preordered semiring $S$ is of {\it polynomial growth} when there is $u\in S$ such that
$$
x\rgeq y\ \Rightarrow\ \exists k\in\N:\ yu^k\rgeq x.
$$
Such an element $u$ is called a {\it power universal}. A preordered semiring $S$ is {\it zerosumfree} if
$$
x+y=0\ \Rightarrow\ x=0=y.
$$
We say that a preordered semiring $S$ is a {\it preordered semidomain} if
$$
xy=0\ \Rightarrow\ x=0\ {\rm or}\ y=0
$$
and
$$
0\rgeq x\rgeq 0\ \Rightarrow\ x=0.
$$

Given two preordered semirings $(S,0_S,1_S,+,\cdot,\rgeq_S)$ and $(T,0_T,1_T,+,\cdot,\rgeq_T)$, we say that a map $\Phi:S\to T$ is a {\it monotone homomorphism} if
\begin{itemize}
\item $\Phi(0_S)=0_T$, $\Phi(1_S)=1_T$,
\item $\Phi(x+y)=\Phi(x)+\Phi(y)$ for all $x,y\in S$,
\item $\Phi(xy)=\Phi(x)\Phi(y)$ for all $x,y\in S$, and
\item $x\rgeq_S y$ $\Rightarrow$ $\Phi(x)\rgeq_T\Phi(y)$.
\end{itemize}
We say that a monotone homomorphism $\Phi:S\to T$ is {\it degenerate} if $x\rgeq y$ implies $\Phi(x)=\Phi(y)$. Otherwise $\Phi$ is {\it nondegenerate}. We are primarily interested in monotone homomorphisms $\Phi:S\to\mb K$ where $\mb K\in\{\R_+,\R_+^{\rm op},\T\R_+,\T\R_+^{\rm op}\}$. Given a monotone homomorphism $\Phi:S\to\R_+$, we say that a map $\Delta:S\to\R$ is a {\it monotone derivation at $\Phi$} if $x\rgeq y$ implies $\Delta(x)\geq\Delta(y)$, $\Delta(x+y)=\Delta(x)+\Delta(y)$ for all $x,y\in S$, and
$$
\Delta(xy)=\Delta(x)\Phi(y)+\Phi(x)\Delta(y)\qquad{\rm (Leibniz\ rule)}
$$
for all $x,y\in S$. We are only interested in monotone derivations at degenerate homomorphisms $\Phi$ in which case we may view $\Phi$ as a monotone homomorphism of $S$ to $\R_+$ or, equivalently, $\R_+^{\rm op}$.

\begin{definition}\label{def:deg}
We say that a preordered semiring $S$ is {\it of degeneracy $d$} for some $d\in\N$ if there is a surjective homomorphism $\|\cdot\|:S\to\R_{>0}^d\cup\{(0,\ldots,0)\}$ with trivial kernel such that
$$
a\rgeq b\ \Rightarrow\ \|a\|=\|b\|,\quad \|a\|=\|b\|\ \Rightarrow\ a\sim b.
$$
In this situation, we denote the component homomorphisms of $\|\cdot\|$ by $\|\cdot\|_{(k)}$ for $k=1,\ldots,d$.
\end{definition}

Let us briefly note that, if $S$ is of degeneracy $d$, we have $x\sim y$ if and only if $\|x\|=\|y\|$. Naturally, $\|x\|=\|y\|$ $\Rightarrow$ $x\sim y$. For the converse, assume that $x\sim y$ and $z_1,\ldots,z_n\in S$ are such that $x\rgeq z_1\rleq z_2\rgeq\cdots\rleq z_n\rgeq y$. The first inequality in the chain implies $\|x\|=\|z_1\|$, the second implies $\|z_1\|=\|z_2\|$, and so on until $\|z_{n-1}\|=\|z_n\|$ and $\|z_n\|=\|y\|$. Thus,
$$
\|x\|=\|z_1\|=\|z_2\|=\cdots=\|z_{n-1}\|=\|z_n\|=\|y\|.
$$

\begin{definition}\label{def:TestSpectrum}
Let $S$ be a preordered semiring of polynomial growth and of degeneracy $d$ where we fix a power universal $u$ and the vector-valued homomorphism $\|\cdot\|$ of Definition \ref{def:TestSpectrum}. We denote the set of all nondegenerate monotone homomorphisms $\Phi:S\to\mb K$ with $\mb K\in\{\R_+,\R_+^{\rm op},\T\R_+,\T\R_+^{\rm op}\}$ by $\Sigma(S,\mb K)$ and the set of all the monotone derivations $\Delta:S\to\R$ at $\|\cdot\|_{(k)}$ ($k=1,\ldots,d$) with $\Delta(u)=1$ by $\mf D^k(S)$. For $\mb K\in\{\R_+,\R_+^{\rm op},\T\R_+,\T\R_+^{\rm op}\}$, we define
$$
\mf D(S,\mb K):=\left\{\frac{\log{\Phi(\cdot)}}{\log{\Phi(u)}}\,\middle|\,\Phi\in\Sigma(S,\mb K)\right\}
$$
as a set of maps $S\setminus\{0\}\to\R$. For $k\in\{1,\ldots,d\}$, we define
$$
\mf D_k(S):=\left\{S\setminus\{0\}\ni x\mapsto\frac{\Delta(x)}{\|x\|_{(k)}}\,\middle|\,\Delta\in\mf D^k(S)\right\}.
$$
Finally, we define the {\it test spectrum} through
$$
\hat{\mf D}(S):=\mf D(S,\R_+)\cup\mf D(S,\R_+^{\rm op})\cup\mf D(S,\T\R_+)\cup\mf D(S,\T\R_+^{\rm op})\cup\mf D_1(S)\cup\cdots\cup\mf D_d(S).
$$
\end{definition}

In \cite{FritzII}, the test spectrum was defined as the union of the sets $\Sigma(S,\mb K)$ with $\mb K\in\{\R_+,\R_+^{\rm op},\T\R_+,\T\R_+^{\rm op}\}$ and $\mf D^k(S)$ for $k=1,\ldots,d$. However, it is useful for our future discussion to define the test spectrum as the set $\hat{\mf D}(S)$ defined in Definition \ref{def:TestSpectrum}. The following central result ({\it Vergleichsstellensatz}) for this work is Theorem 8.6 in \cite{FritzII}:

\begin{theorem}\label{thm:Vergleichsstellensatz}
Let $S$ be a zerosumfree preordered semidomain of polynomial growth and of degeneracy $d$ for some $d\in\N$. We fix a power universal $u\in S$ and the vector-valued homomorphism $\|\cdot\|$ of Definition \ref{def:deg} to define the test spectrum $\hat{\mf D}(S)$. Suppose that $x,y\in S\setminus\{0\}$ are such that $\|x\|=\|y\|$. If
\begin{equation}\label{eq:FritzConditions}
\Delta(x)>\Delta(y)\qquad\forall\Delta\in\hat{\mf D}(S),
\end{equation}
then
\begin{itemize}
\item[(a)] $x^nu^k\rgeq y^nu^k$ for some $k\in\N$ when $n\in\N$ is sufficiently large and
\item[(b)] $xz\rgeq yz$ for some $z\in S\setminus\{0\}$ which can be chosen according to
$$
z=u^k\sum_{\ell=0}^n x^\ell y^{n-\ell}
$$
for some $k\in\N$ and $n\in\N$ sufficiently large.
\end{itemize}
If $x$ is a power universal, we may omit the appearance of $u^k$ in items (a) and (b) above. Conversely, if condition (a) or (b) above holds, then the inequalities in \eqref{eq:FritzConditions} hold non-strictly.
\end{theorem}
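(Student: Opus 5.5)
This statement is Theorem 8.6 of \cite{FritzII}, so the plan is to reduce to that result rather than re-derive the real-algebraic machinery. The only work is to check that our definitions match the hypotheses there, and that passing from Fritz's test spectrum (monotone homomorphisms and normalized derivations) to our rescaled set $\hat{\mf D}(S)$ changes none of the inequalities.

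\emph{Step 1: hypotheses.} Our zerosumfreeness, semidomain property, polynomial growth with fixed power universal $u$, and degeneracy $d$ (Definition \ref{def:deg}) are the standing assumptions of the Vergleichsstellensatz in \cite{FritzII}. The surjective homomorphism $\|\cdot\|$ onto $\R_{>0}^d\cup\{0\}$ with trivial kernel plays the role of Fritz's degenerate homomorphisms. These are exactly the $d$ component maps $\|\cdot\|_{(k)}$, and the derivations $\mf D^k(S)$ are taken at them. Since $\|x\|=\|y\|$, all degenerate homomorphisms agree on $x$ and $y$, which is Fritz's standing equality assumption.

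\emph{Step 2: translating the conditions.} Fix $\Phi\in\Sigma(S,\mb K)$. Nondegeneracy together with $u$ being power universal should give $\Phi(u)>1$ when $\mb K\in\{\R_+,\T\R_+\}$ and $\Phi(u)<1$ for the opposite orders. Hence $\log\Phi(u)$ is nonzero with a known sign. Fritz's condition is $\Phi(x)>\Phi(y)$ in the order of $\mb K$, that is, $\Phi(x)>\Phi(y)$ or $\Phi(x)<\Phi(y)$ as reals according to the case. Applying $\log$ and dividing by $\log\Phi(u)$ turns both versions into
\[
\frac{\log\Phi(x)}{\log\Phi(u)}>\frac{\log\Phi(y)}{\log\Phi(u)}.
\]
Nonzero $x,y$ have $\Phi(x),\Phi(y)>0$ because the kernel is trivial and $S$ is a semidomain, so the logarithms are defined. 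For derivations, $\|x\|_{(k)}=\|y\|_{(k)}>0$, so $\Delta(x)>\Delta(y)$ is equivalent to $\Delta(x)/\|x\|_{(k)}>\Delta(y)/\|y\|_{(k)}$. The normalization $\Delta(u)=1$ matches Fritz's. Thus \eqref{eq:FritzConditions} is equivalent to Fritz's strict test-spectrum condition, and the same computation with non-strict inequalities handles the converse.

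\emph{Step 3: conclusions.} Items (a) and (b), the explicit catalyst $u^k\sum_\ell x^\ell y^{n-\ell}$, and the removal of $u^k$ when $x$ is power universal are read off from Theorem 8.6 of \cite{FritzII} and its proof. For the converse one can also argue directly. Apply a monotone homomorphism to $x^nu^k\rgeq y^nu^k$, take logs, and divide by $n$. For a derivation, the Leibniz rule gives
\[
n\|x\|_{(k)}^{n-1}\|u^k\|_{(k)}\,\Delta(x)+\cdots\geq n\|y\|_{(k)}^{n-1}\|u^k\|_{(k)}\,\Delta(y)+\cdots,
\]
where the remaining terms coincide because $\|x\|=\|y\|$. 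Case (b) is handled the same way, using $\Phi(z)>0$ and $\|z\|_{(k)}>0$ to cancel.

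The main obstacle I expect is Step 2's sign bookkeeping for $\log\Phi(u)$, in particular showing $\Phi(u)\neq1$ for nondegenerate $\Phi$. The tropical cases need the same care, though multiplication there is still the ordinary product, so the logarithm argument is unchanged.
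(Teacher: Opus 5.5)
Your proposal takes the same route as the paper, which gives no proof of its own and imports the result as Theorem 8.6 of \cite{FritzII}. Your Step 2 (the sign of $\log\Phi(u)$ fixed by nondegeneracy and power universality, and division by $\|x\|_{(k)}=\|y\|_{(k)}>0$) is exactly the unstated check that the rescaled $\hat{\mf D}(S)$ encodes the same strict and non-strict inequalities as Fritz's test spectrum.

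One point needs repair: the delicate issue is $\Phi(u)>0$, not $\Phi(u)\neq 1$. Positivity of $\Phi$ on $S\setminus\{0\}$ does not follow from the trivial kernel of $\|\cdot\|$ and the semidomain property. For $\R_+$ and $\T\R_+$ it does hold, by propagating $\Phi\neq0$ along a chain witnessing $xw\sim1$ (where $\|xw\|=(1,\ldots,1)$) using power universality. For the opposite orders it can fail. Take $S=\R_+[X]$ (polynomials with non-negative coefficients), $\|p\|=p(1)$, $u=X$, and $p\rgeq q$ iff $p(1)=q(1)$, $p\geq q$ on $[1,\infty)$ and $p\leq q$ on $[0,1]$. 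Then $p\mapsto p(0)$ is a nondegenerate monotone homomorphism into $\R_+^{\rm op}$ that kills $u$. So positivity has to be imposed: as far as I recall Fritz does this by restricting to homomorphisms with trivial kernel, and the paper does it tacitly by taking $\log\Phi$ on $S\setminus\{0\}$.
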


Before going on, let us note that any power universal $u\in S$ has $\|u\|=(1,\ldots,1)$ as long as $\|\cdot\|$ satisfies the conditions of Definition \ref{def:deg}. This is naturally equivalent with $u\sim 1$. To see this, assume that $x,y\in S$ are such that $x\rleq y$, so that there is $k\in\N$ such that $xu^k\rgeq y$. Since $x\rleq y$, we have $\|x\|=\|y\|$ and, since $xu^k\rgeq y$, we have $\|x\|\cdot\|u\|^k=\|y\|=\|x\|$ where we have used the first observation in the final equality. From this we immediately see that $\|u\|=(1,\ldots,1)$.

\subsection{Sample spaces and stochastic maps}

We are concentrating on standard Borel measurable spaces in this work. Recall that a measurable space $(X,\mc A)$ (i.e.,\ a non-empty set $X$ with a $\sigma$-algebra $\mc A$ of subsets of $X$) is standard Borel if it is $\sigma$-isomorphic with $\big(E,\mc B(E)\big)$ where $E$ is a Borel measurable subset of a Polish space $P$ (i.e.,\ a separable completely metrizable topological space) and $\mc B(E)$ is the restriction of the Borel $\sigma$-algebra $\mc B(P)$ onto $E$; $\mc B(E):=\{E\cap B\,|\,B\in\mc B(P)\}$. Most of our work is completely applicable to more general measurable spaces. However, stochastic maps between standard Borel spaces have a particularly nice form as they are discribed by Markov kernels which we subsequently discuss. Another reason is a bit more subtle: We subsequently have to discuss a proper class of tuples of finite positive measures over {\it all} standard Borel measurable spaces. However, we shall build a semiring out of this class by identifying $\sigma$-isomorphic tuples with each other. As any standard Borel measurable space is $\sigma$-isomorphic with a subspace of $\big(\R,\mc B(\R)\big)$, the semiring is a proper set.

In the sequel, we need to discuss certain sample spaces composed of two component sample spaces in order to define the binary operations (sum and multiplication) that we need in order to apply Theorem \ref{thm:Vergleichsstellensatz}. Let $(X,\mc A)$ and $(Y,\mc B)$ be standard Borel measurable spaces. We denote by $\mc A\otimes\mc B$ the coarsest $\sigma$-algebra on $X\times Y$ which contains the product sets $A\times B$, $A\in\mc A$, $B\in\mc B$. Given measures $\mu:\mc A\to\R$ and $\nu:\mc B\to\R$, we denote their product measure by $\mu\otimes\nu$. We also denote by $X\sqcup Y$ the disjoint union of $X$ and $Y$ which we equip with the $\sigma$-algebra $\mc A\oplus\mc B$ which is the coarsest $\sigma$-algebra on $X\sqcup Y$ containing the disjoint unions $A\sqcup B$, $A\in\mc A$ and $B\in\mc B$. Given measures $\mu:\mc A\to\R$ and $\nu:\mc B\to\R$, we define the measure $\mu\oplus\nu:\mc A\oplus\mc B\to\R$ through $(\mu\oplus\nu)(C)=\mu(C\cap X)+\nu(C\cap Y)$ for all $C\in\mc A\oplus\mc B$. Naturally, also $(X\times Y,\mc A\otimes\mc B)$ and $(X\sqcup Y,\mc A\oplus\mc B)$ are standard Borel measurable spaces.

Let $(X,\mc A)$ and $(Y,\mc B)$ be standard Borel measurable spaces. We say that $T(\cdot|\cdot):\mc B\times X\to[0,1]$ is a {\it Markov kernel} if $T(\cdot|x):\mc B\to[0,1]$ is a probability measure for all $x\in X$ and $T(B|\cdot):X\to[0,1]$ is $\mc A$-measurable for all $B\in\mc B$. Given a finite measure $\mu:\mc A\to\R$, we may define a new finite measure $T\mu:\mc B\to\R$, $(T\mu)(B)=\int_X T(B|x)\,d\mu(x)$ for all $B\in\mc B$. Clearly $(T\mu)(Y)=\mu(X)$. Consequently, when $\mu$ is a probability measure, so is $T\mu$. More general stochastic (affine) maps mapping probability measures on $(X,\mc A)$ into probability measures on $(Y,\mc B)$ are described by weak Markov kernels \cite{JePuVi2008} which define bimeasures on $\mc A\times\mc B$. However, when the input and output spaces are standard Borel, the bimeasures extend into measures on $(X\times Y,\mc A\otimes\mc B)$ \cite[Lemma 4.2.1]{QTOS}, so that we may replace weak Markov kernels with ordinary Markov kernels as defined above.

\section{The majorization semiring}\label{sec:MajSemiring}

From nor on, we fix $d\in\N$. Given a standard Borel measurable space $(X,\mc A)$, we say that $P=\big(p^{(1)},\ldots,p^{(d)}\big)$ is a {\it regular finite statistical experiment} (of size $d$) if $p^{(k)}:\mc A\to[0,1]$ are probability measures within the same measure class (i.e.,\ $p^{(k)}\ll p^{(\ell)}$ for $k,\ell=1,\ldots,d$) such that there is a real number $R>0$ with
\begin{equation}\label{eq:finite}
p^{(\ell)}\mathrm{-ess}\,\sup\frac{dp^{(k)}}{dp^{(\ell)}}\leq R,\qquad k,\ell\in\{1,\ldots,d\}.
\end{equation}
It immediately follows that, for all $k,\ell\in\{1,\ldots,d\}$, $dp^{(k)}/dp^{(\ell)}$ is also $p^{(\ell)}$-essentially bounded away from zero by $1/R>0$. Note that the attribute `regular' refers to the condition in \eqref{eq:finite} and `finite' refers to $P$ being a finite ($d<\infty$) collection of probability measures. Given finite positive measures $q^{(k)}:\mc A\to\R_+$, we say that $Q=\big(q^{(1)},\ldots,q^{(d)}\big)$ is a {\it regular $d$-tuple (over $(X,\mc A)$)} if $q^{(k)}=0$ for $k=1,\ldots,d$ or $q^{(k)}\neq0$ for all $k\in\{1,\ldots,d\}$ and, defining $p^{(k)}:=q^{(k)}(X)^{-1}q^{(k)}$, $\big(p^{(1)},\ldots,p^{(d)}\big)$ is a regular finite statistical experiment. We denote by $V_{(X,\mc A)}$ the set of all regular $d$-tuples over $(X,\mc A)$ and by $V$ the class obtained as a union of $V_{(X,\mc A)}$ over all standard Borel measurable spaces $(X,\mc A)$.

When $P=\big(p^{(1)},\ldots,p^{(d)}\big)$ is a regular $d$-tuple, we often use phrases like `$P$-a.a.' or `$\nu\ll P$' for a measure $\nu$ in the sense that we refer to the shared measure class of all the measures $p^{(k)}$. When we need to make a reference to a particular representative of the measure class, we often use $p^{(d)}$.

For $P=\big(p^{(1)},\ldots,p^{(d)}\big)\in V_{(X,\mc A)}$ and $Q=\big(q^{(1)},\ldots,q^{(d)}\big)\in V_{(Y,\mc B)}$, we denote $P\succeq Q$ if there exists a Markov kernel $T(\cdot|\cdot):\mc B\times X\to[0,1]$ such that $q^{(k)}=Tp^{(k)}$, $k=1,\ldots,d$. We also define $P\boxplus Q\in V_{(X\sqcup Y,\mc A\oplus\mc B)}$ and $P\boxtimes Q\in V_{(X\times Y,\mc A\otimes\mc B)}$ through
\begin{align*}
P\boxplus Q&=\big(p^{(1)}\oplus q^{(1)},\ldots,p^{(d)}\oplus q^{(d)}\big),\\
P\boxtimes Q&=\big(p^{(1)}\otimes q^{(1)},\ldots,p^{(d)}\otimes q^{(d)}\big).
\end{align*}
We denote $P\approx Q$ if there is a standard Borel measurable space $(Z,\mc C)$, a regular $d$-tuple $R=\big(r^{(1)},\ldots,r^{(d)}\big)$ over $(Z,\mc C)$, and measurable injections $f:X\to Z$ and $g:Y\to Z$ such that $p^{(k)}=r^{(k)}\circ f^{-1}$ and $q^{(k)}=r^{(k)}\circ g^{-1}$ for $k=1,\ldots,d$. We lift the binary operators and relation $\boxplus$, $\boxtimes$, and $\succeq$ into $V/\!\approx$ denoted respectively by $+$, $\cdot$, $\rgeq$. Given the trivial measurable space $(0,\{\emptyset,\{0\}\})$, we denote the $\approx$-equivalence class of the element $(0,\ldots,0)$ (where $0$ is the zero measure on $(0,\{\emptyset,\{0\})\}$) by $0$ and the $\approx$-equivalence class of the element $(1,\ldots,1)$ (where $1$ is the unique probability measure on $(0,\{\emptyset,\{0\}\})$) by $1$. We may now define the preordered semiring
$$
S^d:=\big(V/\!\approx,0,1,+,\cdot,\rgeq\big).
$$
In the sequel we view functions on $S^d$ as functions on $V$ which are implicitly understood to be constant on $\approx$-equivalence classes.

Next we show that the majorization semiring $S^d$ is amenable to Theorem \ref{thm:Vergleichsstellensatz}, i.e.,\ that $S^d$ is of polynomial growth and possesses the degenerate homomorphism $\|\cdot\|$ of Definition \ref{def:deg}. However, we first prove a very simple but useful lemma that we will use repeatedly in our following proofs. To state this lemma, let us define, for all $\lambda\geq0$, the regular finite statistical experiment
\begin{align}
V_\lambda&=\big(v_\lambda^{(1)},\ldots,v^{(d)}_\lambda\big)\in V_{\big(\{1,\ldots,d\},2^{\{1,\ldots,d\}}\big)},\label{eq:Vlambda}\\
v_\lambda^{(k)}(\ell)&=\frac{1+\lambda\delta_{k,\ell}}{d+\lambda},\qquad k,\ell=1,\ldots,d,
\end{align}
where $\delta_{k,\ell}$ is the Kronecker delta. Here, as often in the sequel, we treat finitely supported measures as functions on the sample set.

\begin{lemma}\label{lemma:apu}
Let $P=\big(p^{(1)},\ldots,p^{(d)}\big)$ be a regular finite statistical experiment over some standard Borel measurable space such that there are $a,b>0$, $a\leq b$, such that $a\leq dp^{(k)}/dp^{(d)}\leq b$ $P$-a.e.. We have $V_\lambda\succeq P$ whenever
\begin{equation}\label{eq:positiivisuus}
\lambda\geq (d-1)\frac{b}{a}+\frac{1}{a}-d.
\end{equation}
\end{lemma}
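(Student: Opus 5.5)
The plan is to construct the Markov kernel explicitly. A kernel $T$ from $\big(\{1,\ldots,d\},2^{\{1,\ldots,d\}}\big)$ to $(X,\mc A)$ is just a $d$-tuple of probability measures $t_1,\ldots,t_d$ on $(X,\mc A)$, with $T(\cdot|\ell)=t_\ell$. The condition $TV_\lambda=P$ reads
$$
\frac{1}{d+\lambda}\Big(\sum_{\ell=1}^d t_\ell+\lambda t_k\Big)=p^{(k)},\qquad k=1,\ldots,d.
$$
This is a linear system for the $t_k$, and I will solve it first and check positivity afterwards.

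To solve it, sum over $k$. This gives $(d+\lambda)\sum_\ell t_\ell=(d+\lambda)\sum_\ell p^{(\ell)}$, so $\sum_\ell t_\ell=\sum_\ell p^{(\ell)}$. For $\lambda>0$ the system then forces
$$
t_k=\frac{1}{\lambda}\Big((d+\lambda)p^{(k)}-\sum_{\ell=1}^d p^{(\ell)}\Big).
$$
Each $t_k$ is a finite signed measure with total mass $\big((d+\lambda)-d\big)/\lambda=1$. Substituting back shows the system is indeed satisfied. It remains to show $t_k\geq0$.

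The positivity step is the only real content. Write $f_\ell=dp^{(\ell)}/dp^{(d)}$, so that $f_d=1$ and $a\leq f_\ell\leq b$ a.e. We need, $P$-a.e.,
$$
(d+\lambda-1)f_k\geq\sum_{\ell\neq k}f_\ell .
$$
First observe that $a\leq1\leq b$, since each $f_\ell$ integrates to $1$ against $p^{(d)}$. Condition \eqref{eq:positiivisuus} is equivalent to $(d+\lambda)a\geq(d-1)b+1$, hence
$$
(d+\lambda-1)a\geq(d-1)b+1-a .
$$

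\emph{Case $k\neq d$.} Here $(d+\lambda-1)f_k\geq(d+\lambda-1)a\geq(d-2)b+1+(b-a)\geq(d-2)b+1$. The right-hand side bounds $\sum_{\ell\neq k}f_\ell$, since it consists of $d-2$ terms at most $b$ plus $f_d=1$.

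\emph{Case $k=d$.} Using $a\leq1$, we get $d+\lambda-1\geq(d+\lambda-1)a\geq(d-1)b+(1-a)\geq(d-1)b\geq\sum_{\ell<d}f_\ell$.

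For the degenerate case $\lambda=0$, the hypothesis forces $a=b=1$. Then all $p^{(k)}$ coincide, and the constant kernel $t_\ell=p^{(1)}$ works, since $V_0$ is uniform in every component.

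I expect the only delicate point to be this bookkeeping in the positivity check: the index $d$ has $f_d=1$ rather than a general value in $[a,b]$, so it must be treated separately, and the observation $a\leq1\leq b$ is what makes both cases close.
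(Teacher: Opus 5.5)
Your proof is correct and follows the same route as the paper. Your $t_k=\frac{1}{\lambda}\big((d+\lambda)p^{(k)}-\sum_\ell p^{(\ell)}\big)$ is exactly the paper's kernel $T_\lambda(\cdot|k)=\sum_\ell s^\lambda_{k,\ell}p^{(\ell)}$, obtained there by inverting the matrix of $V_\lambda$, and positivity is checked from $a\le dp^{(\ell)}/dp^{(d)}\le b$ in both. The paper instead uses the single uniform estimate $(d+\lambda)f_k-\sum_\ell f_\ell\ge(d+\lambda)a-(d-1)b-1$ rather than your split into $k\neq d$ and $k=d$, and your separate treatment of $\lambda=0$ covers a case the paper passes over, since its $s^\lambda_{k,\ell}$ divides by $\lambda$.
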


\begin{proof}
Define $s^\lambda_{k,\ell}:=\big((d+\lambda)\delta_{k,\ell}-1\big)/\lambda$. It is easy to verify that
\begin{equation}\label{eq:kaanteis}
\sum_{\ell=1}^d s^\lambda_{\ell,m}v^{(k)}_\lambda(\ell)=\delta_{k,m},\quad k,\ell=1,\ldots,d.
\end{equation}
If $\sum_{\ell=1}^d s^\lambda_{k,\ell}\,dp^{(\ell)}/dp^{(d)}\geq0$ for $k=1,\ldots,d$, we may define positive measures
$$
T_\lambda(\cdot|k)=\sum_{\ell=1}^d s^\lambda_{k,\ell}p^{(\ell)},\qquad k=1,\ldots,d,
$$
which are immediately seen to be also probability measures. Thus we have a Markov kernel $T_\lambda(\cdot|\cdot)$, and we find
\begin{align*}
T_\lambda v_\lambda^{(k)}&=\sum_{\ell=1}^d T_\lambda(\cdot|\ell)v^{(k)}_\lambda(\ell)=\sum_{\ell,m=1}^d s^\lambda_{\ell,m}v^{(k)}_\lambda(\ell) p^{(m)}=p^{(k)}
\end{align*}
where we have used \eqref{eq:kaanteis} in the final equality. We find that $V_\lambda\succeq P$ as long as $T_\lambda(\cdot|k)$ are positive measures. For this, let us evaluate
\begin{align*}
\frac{dT_\lambda(\cdot|k)}{dp^{(d)}}&=\sum_{\ell=1}^d s^\lambda_{k,\ell}\frac{dp^{(\ell)}}{dp^{(d)}}=\frac{1}{\lambda}\left((d+\lambda)\frac{dp^{(k)}}{dp^{(d)}}-\sum_{\ell=1}^d\frac{dp^{(\ell)}}{dp^{(d)}}\right)\\
&\geq \frac{1}{\lambda}\big((d+\lambda)a-(d-1)b-1\big).
\end{align*}
We immediately see that the final expression is non-negative whenever \eqref{eq:positiivisuus} holds.
\end{proof}

We notice that the above proof can be refined slightly to obtain the following: Let $P=\big(p^{(1)},\ldots,p^{(d)}\big)$ be a regular finite statistical experiment and $a^{(k)},\,b^{(k)}>0$ be such that $a^{(k)}\leq dp^{(k)}/dp^{(d)}\leq b^{(k)}$, $k=1,\ldots,d-1$, $P$-a.e.. Then $V_\lambda\succeq P$ if
$$
\lambda\geq\frac{1}{a^{(k)}}\sum_{\ell=1}^{d-1}b^{(\ell)}+\frac{1}{a^{(k)}}-d
$$
for all $k\in\{1,\ldots,d\}$. This version is useful later in the proof of Proposition \ref{prop:tropical}.

\begin{proposition}\label{lemma:pu}
The preordered semidomain $S^d$ is of polynomial growth and the $\approx$-equivalence class of $U=\big(u^{(1)},\ldots,u^{(d)}\big)\in V_{(Z,\mc C)}$ over some standard Borel $(Z,\mc C)$ is a power universal if and only if $u^{(k)}(Z)=1$ for $k=1,\ldots,d$, (i.e.,\ $U$ is a regular finite statistical experiment) and $u^{(k)}\neq u^{(\ell)}$ whenever $k\neq\ell$.
\end{proposition}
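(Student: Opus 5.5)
Necessity and sufficiency are handled separately; sufficiency carries the real work, and there I would reduce everything to the finite experiments $V_\lambda$ of Lemma \ref{lemma:apu}. Throughout I use the definition of power universal: $X\rgeq Y$ implies $YU^k\rgeq X$ for some $k$.

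\emph{Necessity.} Take any $X\rgeq Y$ with $\|X\|=\|Y\|$ and $YU^k\rgeq X$. Majorization preserves total masses componentwise, so $\|Y\|\,\|U\|^k=\|X\|$, which forces $u^{(k)}(Z)=1$ for all $k$. The semiring is nontrivial here: for instance $V_1\succeq V_0$ with $V_0=(1,\dots,1)$ normalized, but $V_0\not\succeq V_1$. Now suppose $u^{(k)}=u^{(\ell)}$ for some $k\neq\ell$. Then every power $U^{\boxtimes n}$ and every product $YU^{\boxtimes n}$ with $Y=1$ still has equal $k$-th and $\ell$-th components. A Markov kernel preserves such an equality, so $U^n\not\succeq X$ whenever $x^{(k)}\neq x^{(\ell)}$. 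Taking $X=V_1\rgeq 1$ then contradicts the definition. Hence the components of $U$ must be pairwise distinct probability measures.

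\emph{Sufficiency.} Let $U$ be a regular experiment with pairwise distinct components, and let $X\rgeq Y$ with $X,Y\neq0$. The first step normalizes: masses agree, so after scaling by the common mass vector it suffices to treat regular experiments $P\succeq Q$ and show $QU^k\succeq P$. Since $Q\succeq 1$ (discard the outcome), it is enough to prove $U^k\succeq P$. By Lemma \ref{lemma:apu}, $V_\lambda\succeq P$ for $\lambda$ large, where the required $\lambda$ depends only on the density bounds $a,b$ of $P$. So the key claim is that $U^{\boxtimes k}\succeq V_\lambda$ for $k$ large.

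To prove this, I would first show $U^{\boxtimes k}$ has an outcome test separating all components well. Because $u^{(k)}\neq u^{(\ell)}$, there is a measurable set, and hence a finite partition, on which the pushforwards of the $u^{(i)}$ are pairwise distinct. Coarse-graining therefore gives $U\succeq U'$ with $U'$ a finite matrix having distinct columns. Product structure then gives $U^{\boxtimes k}\succeq U'^{\boxtimes k}$. By a Chernoff/Stein-type multiple hypothesis test (maximum likelihood on $U'^{\boxtimes k}$), one obtains a test $\{E_1,\dots,E_d\}$ with error probabilities tending to $0$. This yields $U^{\boxtimes k}\succeq W_k$, where $W_k(\ell|i)=(1-\epsilon_k)\delta_{i\ell}+\dots$ is close to the identity matrix.

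The last step is to show that $W_k\succeq V_\lambda$ once $W_k$ is close enough to the identity. Note that $V_\lambda$ equals $(1-t)I+tJ/d$ with $t=d/(d+\lambda)>0$, which is an interior point. Solve $V_\lambda=TW_k$ with $T=V_\lambda W_k^{-1}$; this $T$ has column sums one automatically. Its entries are nonnegative for small $\epsilon_k$ because $V_\lambda$ has strictly positive entries and $W_k^{-1}\to I$.

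The main obstacle I expect is the hypothesis-testing step, namely extracting a finite separating partition and controlling the error decay uniformly. If the large-deviation argument becomes awkward, I would try a more algebraic route instead. That route would first show $U'\succeq$ some $V_{\lambda_0}$-type matrix after local mixing, and then use $V_{\lambda}^{\boxtimes k}\succeq V_{\lambda'}$ with $\lambda'\to\infty$, taking majority votes over the coordinates.
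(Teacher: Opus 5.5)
Your proposal is correct. The necessity half and the reductions in the sufficiency half coincide with the paper's. These reductions are normalizing by the common mass vector, using $Q\succeq 1$, invoking Lemma \ref{lemma:apu} to get $V_\lambda\succeq P$, and coarse-graining $U$ to a finite experiment $U'$ with distinct columns.

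The genuine difference is the key step $(U')^{\boxtimes k}\succeq V_\lambda$. The paper does not prove this; it imports it from the finite-outcome theory via Lemma 12 of \cite{Farooq_et_al_2024}, which says that every full-support finite experiment with distinct columns is a power universal of the matrix-majorization semiring. The paper uses that lemma twice. First it gets $V^{\boxtimes n_1}\succeq V_\lambda$ for a fixed $\lambda$. Then it gets $V_\lambda^{\boxtimes n_2}\succeq V_\mu\succeq\tilde P$ for large $\mu$, and chains these into $U^{\boxtimes n_1 n_2}\succeq\tilde P$.

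You instead prove the needed special case directly, in two moves:
\begin{enumerate}
\item A consistent $d$-ary test on $k$ i.i.d.\ copies yields a confusion matrix $W_k\to I$.
\item Since $V_\lambda=(1-t)I+tJ/d$ has all entries at least $1/(d+\lambda)>0$, the matrix $V_\lambda W_k^{-1}$ has unit column sums automatically. It is also entrywise nonnegative for large $k$, hence a Markov kernel.
\end{enumerate}
You also go straight to the $\lambda$ dictated by $P$ rather than chaining through a fixed $\lambda$.

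What each route buys: yours is self-contained, gives an explicit mechanism, and in principle yields quantitative bounds on $k$ in terms of the density bound $R$. The paper's is shorter and stays inside the finite-outcome results it builds on.

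Two small corrections to tighten your write-up. First, you do not need Chernoff or Stein bounds. The weak law of large numbers for empirical frequencies already sends the error probabilities of a nearest-empirical-distribution test to zero. Your anticipated uniformity obstacle is also not real: $\lambda$ is fixed once $P$ is, and there are only $d$ hypotheses. Second, a single measurable set need not separate all pairs $u^{(k)},u^{(\ell)}$ simultaneously. Phrase that step as one set $C_{k,\ell}$ per pair followed by a common finite refinement, discarding null cells so that $U'$ has full support. Finally, to conclude polynomial growth, note explicitly that such a $U$ exists, e.g.\ $V_1$.
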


\begin{proof}
Let us first pick a regular finite statistical experiment $P\in V_{(X,\mc A)}$ on a standard Borel measurable space $(X,\mc A)$. Suppose that $R>0$ is such that \eqref{eq:finite} holds for $P$. Substituting $a:=1/R$ and $b=R$, we see that the assumptions of Lemma \ref{lemma:apu} hold, so that, whenever $\lambda\geq(d-1)R^2+R-d.
$ we have $V_\lambda\succeq P$. When we restrict our current semiring to the finite-outcome case, we know that any full-support statistical experiment $V=\big(v^{(1)},\ldots,v^{(d)}\big)$ over a finite sample space, with $k\neq\ell$ $\Rightarrow$ $v^{(k)}\neq v(\ell)$, is a power universal of the restricted semiring; see \cite[Lemma 12]{Farooq_et_al_2024} and \cite{LeJo97}. Especially, for any $\lambda,\mu>0$, $\mu\geq\lambda$, there is $n\in\N$ such that $V_\lambda^{\boxtimes n}\succeq V_\mu$. Thus, for any $\lambda>0$, any standard Borel measurable space, and any regular finite statistical experiment $P=\big(p^{(1)},\ldots,p^{(d)}\big)\in V_{(X,\mc A)}$, using the above result, there is $n\in\N$ such that $V_\lambda^{\boxtimes n}\succeq P$.

Let $U=\big(u^{(1)},\ldots,u^{(d)}\big)\in V_{(Z,\mc C)}$ be a regular finite statistical experiment over a standard Borel measurable space $(Z,\mc C)$ such that $u^{(k)}\neq u^{(\ell)}$ whenever $k\neq\ell$. Whenever $k\neq\ell$, let $C_{k,\ell}\in\mc C$ be such that $u^{(k)}(C_{k,\ell})\neq u^{(\ell)}(C_{k,\ell})$. Let $\mc P=\{D_1,\ldots,D_m\}\subseteq\mc C$ be any finite partition (i.e.,\ $i\neq j$ $\Rightarrow$ $D_i\cap D_j=\emptyset$, $\bigcup_{i=1}^m D_i=Z$) refining the collection of sets $C_{k,\ell}$, i.e.,\ whenever $k\neq\ell$, there is $I_{k,\ell}\subseteq\{1,\ldots,m\}$ such that $C_{k,\ell}=\bigcup_{i\in I_{k,\ell}}D_i$. We clearly have $U\succeq V$ where $V$ is the coarse-grained finite statistical experiment $\big(v^{(1)},\ldots,v^{(d)}\big)$ consisting of the probability distributions $v^{(k)}:\{1,\ldots,d\}\to[0,1]$, $v^{(k)}(i)=u^{(k)}(D_i)$. If there were $k,\ell$, $k\neq\ell$, such that $v^{(k)}=v^{(\ell)}$, we would have $u^{(k)}(D_i)=u^{(\ell)}(D_i)$ for all $i\in\{1,\ldots,m\}$ which would yield the contradictory $u^{(k)}(C_{k,\ell})=u^{(\ell)}(C_{k,\ell})$ upon summing $i\in I_{k,\ell}$. Thus, $v^{(k)}\neq v^{(\ell)}$ whenever $k\neq\ell$. Let us now fix $\lambda>0$. Again, \cite[Lemma 12]{Farooq_et_al_2024} tells us that there is $n_1\in\N$ such that $V^{\boxtimes n_1}\succeq V_\lambda$. Suppose that $(X,\mc A)$ and $(Y,\mc B)$ are standard Borel measurable spaces and $P=\big(p^{(1)},\ldots,p^{(d)}\big)\in V_{(X,\mc A)}$ and $Q=\big(q^{(1)},\ldots,q^{(d)}\big)\in V_{(Y,\mc B)}$ are non-zero regular $d$-tuples such that $P\succeq Q$. Thus, $p^{(k)}(X)=q^{(k)}(Y)=:a_k>0$. Define the regular finite statistical experiment $\tilde{P}=\big(\tilde{p}^{(1)},\ldots,\tilde{p}^{(d)}\big)$ where $\tilde{p}^{(k)}:=a_k^{-1}p^{(k)}$. We view the tuple $(a_1,\ldots,a_d)$ as a regular $d$-tuple over the trivial measurable space $(0,\{\emptyset,\{0\}\})$, so that $Q\succeq(a_1,\ldots,a_d)$ and $(a_1,\ldots,a_d)\boxtimes\tilde{P}=P$. Let $n_2\in\N$ be such that $V_\lambda^{\boxtimes n_2}\succeq\tilde{P}$; this is possible as we have seen above. For $n=n_1\cdot n_2$, we have $U^{\boxtimes n}\succeq V^{\boxtimes n}\succeq V_\lambda^{\boxtimes n_2}\succeq \tilde{P}$, so that
$$
Q\boxtimes U^{\boxtimes n}\succeq(a_1,\ldots,a_d)\boxtimes U^{\boxtimes n}\succeq(a_1,\ldots,a_d)\boxtimes\tilde{P}=P.
$$
This shows that the $\approx$-equivalence class of $U$ is a power universal.

Let $U=\big(u^{(1)},\ldots,u^{(d)}\big)\in V_{(Z,\mc C)}$ be such that the $\approx$-equivalence class of $U$ is a power universal. From the definition of power universals it is obvious that $u^{(k)}(Z)=1$ for $k=1,\ldots,d$. Let $P=\big(p^{(1)},\ldots,p^{(d)}\big)$ be a regular finite statistical experiment such that $p^{(k)}\neq p^{(\ell)}$ whenever $k\neq\ell$ (thus a power universal according to the above). Since $P\succeq (1,\ldots,1)$, where $(1,\ldots,1)$ is the canonical representative of the multiplication unit $1$ of $S^d$, there is $n\in\N$ such that
$$
U^{\boxtimes n}=(1,\ldots,1)\boxtimes U^{\boxtimes n}\succeq P.
$$
If there were non-equal $k$ and $\ell$ such that $u^{(k)}=u^{(\ell)}$, we would now have the contradictory $p^{(k)}=T\big(u^{(k)}\big)^{\otimes n}=T\big(u^{(\ell)}\big)^{\otimes n}=p^{(\ell)}$ for some Markov kernel $T$. Thus, $u^{(k)}\neq u^{(\ell)}$ whenever $k\neq\ell$.
\end{proof}

Let us define $\|\cdot\|:S^d\to\R_{>0}^d\cup\{(0,\ldots,0)\}$ through
$$
\|P\|=\big(p^{(1)}(X),\ldots,p^{(d)}(X)\big)
$$
for all $P=\big(p^{(1)},\ldots,p^{(d)}\big)\in V_{(X,\mc A)}$ over any standard Borel measurable space $(X,\mc A)$. Clearly, $\|\cdot\|$ is a homomorphism. Moreover, $P\succeq Q$ implies $\|P\|=\|Q\|$, i.e.,\ $\|\cdot\|$ is degenerate. Suppose now that $\|P\|=\|Q\|=:(a_1,\ldots,a_d)$ where we view $(a_1,\ldots,a_d)$ as a regular $d$-tuple over the trivial measurable space $(0,\{\emptyset,\{0\}\})$. Now we obviously have
$$
P\succeq (a_1,\ldots,a_d)\preceq Q,
$$
so that the implications
$$
P\succeq Q\ \Rightarrow\ \|P\|=\|Q\|,\quad \|P\|=\|Q\|\ \Rightarrow\ P\sim Q
$$
hold. We have thus established that the prerequisites of Theorem \ref{thm:Vergleichsstellensatz} are met.

\subsection{The monotone homomorphisms and derivations of the majorization semiring}\label{subsec:monhom}

For $\mb K\in\{\R_+,\R_+^{\rm op},\T\R_+,\T\R_+^{\rm op})$, we denote the set of nondegenerate monotone homomorphisms $\Phi:S^d\to\mb K$ by $\Sigma(S^d,\mb K)$. For any $k\in\{1,\ldots,d\}$, we denote the set of derivations associated with $k$'th component of the degenerate homomorphism $\|\cdot\|$ by $\mf D^k(S^d)$. This means that $\Delta\in\mf D^k(S^d)$ is an additive monotone map $S^d\to\R$ satisfying the Leibniz rule:
$$
\Delta(P\boxtimes Q)=\Delta(P)q^{(k)}(Y)+p^{(k)}(X)\Delta(Q)
$$
for all $P=\big(p^{(1)},\ldots,p^{(d)}\big)\in V_{(X,\mc A)}$ and $Q=\big(q^{(1)},\ldots,q^{(d)}\big)\in V_{(Y,\mc B)}$.

In the following result we need the {\it Kullback-Leibler relative entropy} $D_{\rm KL}$ defined for any positive measures $p$ and $q$ (such that $p\ll q$) through
$$
D_{\rm KL}(p\|q)=\int\frac{dp}{dq}\log{\frac{dp}{dq}}\,dq.
$$
The Kullback-Leibler relative entropy satisfies the data processing inequality: if $T$ is a Markov kernel, then $D_{\rm KL}(Tp\|Tq)\leq D_{\rm KL}(p\|q)$.

\begin{proposition}\label{prop:TemperateDeriv}
Suppose that
\begin{itemize}
\item[(i)] $\Phi\in\Sigma(S^d,\R_+)$,
\item[(ii)] $\Phi\in\Sigma(S^d,\R_+^{\rm op})$, or
\item[(iii)] $\Delta\in\mf D^k(S^d)$ for some $k\in\{1,\ldots,d\}$.
\end{itemize}
For any standard Borel $(X,\mc A)$ and any $P=\big(p^{(1)},\ldots,p^{(d)}\big)\in V_{(X,\mc A)}$, we have
\begin{equation}\label{eq:temperateform}
\Phi(P)=\int_X \frac{dp^{(1)}}{dp^{(d)}}(x)^{\alpha_1}\cdots\frac{dp^{(d-1)}}{dp^{(d)}}(x)^{\alpha_{d-1}}\,dp^{(d)}(x)
\end{equation}
in cases (i) and (ii) where $\mathuline{\alpha}=(\alpha_1,\ldots,\alpha_d)\in\R^d$, $\alpha_d:=1-\alpha_1-\cdots-\alpha_{d-1}$, is such that, in case (i), there is $\ell\in\{1,\ldots,d\}$ such that $\alpha_\ell>1$ and $\alpha_m\leq0$ for any $m\neq \ell$ and, in case (ii), $\alpha_\ell\geq0$ for $\ell=1,\ldots,d$, and, in case (iii),
\begin{equation}\label{eq:derivationform}
\Delta(P)=\sum_{\ell:\,\ell\neq k}\gamma_\ell D_{\rm KL}\big(p^{(k)}\big\|p^{(\ell)}\big)
\end{equation}
for some $\gamma_\ell\geq0$.
\end{proposition}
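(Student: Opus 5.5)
The plan is to reduce to the finite-outcome (matrix majorization) case, where the monotone homomorphisms and derivations are already classified in \cite{Farooq_et_al_2024,Verhagen_et_al_2024}, and then pass to general standard Borel sample spaces with a sandwiching argument based on the likelihood-ratio vector.

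\emph{Step 1: the finite-outcome subsemiring.} Restricting $\Phi$ (resp.\ $\Delta$) to $\approx$-classes of regular $d$-tuples over finite sample spaces gives a monotone homomorphism (resp.\ derivation at $\|\cdot\|_{(k)}$) of the finite majorization semiring. By the finite results, this restriction has the form \eqref{eq:temperateform} with $\mathuline{\alpha}$ in the stated region, or \eqref{eq:derivationform}. The one exception is that the restriction might be degenerate, in which case it equals some component $\|\cdot\|_{(\ell)}$, i.e.\ $\mathuline{\alpha}=e_\ell$. I will carry this case along and exclude it at the end, since the sandwich below would then force $\Phi=\|\cdot\|_{(\ell)}$ on all of $S^d$, contradicting nondegeneracy.

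\emph{Step 2: sandwiching by quantizing the likelihood ratio.} Let $P\in V_{(X,\mc A)}$. Write $P=\|P\|\boxtimes\tilde P$ with $\tilde P$ normalized, and use multiplicativity (resp.\ the Leibniz rule) to reduce to $\tilde P$. Let
$$
f=\big(dp^{(1)}/dp^{(d)},\ldots,dp^{(d-1)}/dp^{(d)}\big).
$$
By \eqref{eq:finite}, $f$ takes values in the compact box $B=[1/R,R]^{d-1}$. Partition $B$ into cells $C_1,\ldots,C_N$ of diameter $<\varepsilon$. Then $P\approx\boxplus_i P_i$, where $P_i$ is the restriction of $P$ to $f^{-1}(C_i)$, so by additivity $\Phi(P)=\sum_i\Phi(P_i)$. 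Each $P_i$ is sandwiched between two finite objects.
\begin{itemize}
\item[] \emph{Lower side.} $P_i\succeq\|P_i\|$, the one-point tuple of its masses, by the trivial kernel.
\item[] \emph{Upper side.} $W_i\succeq P_i$, where $W_i$ is a finite tuple supported on the vertices $v_1,\ldots,v_J$ of $C_i$. Writing $f(x)=\sum_j w_j(x)v_j$ with measurable barycentric weights $w_j\geq0$, set $W_i^{(k)}(j)=v_j^{k}\int_{f^{-1}(C_i)} w_j\,dp^{(d)}$ (with $v_j^d:=1$) and use the kernel $T(\cdot|j)=w_j\,p^{(d)}\big/\!\int w_j\,dp^{(d)}$ restricted to $f^{-1}(C_i)$. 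Then $TW_i^{(k)}=p^{(k)}|_{f^{-1}(C_i)}$.
\end{itemize}
Monotonicity gives $\Phi(P_i)$ between $\Phi(\|P_i\|)$ and $\Phi(W_i)$; the order of the two bounds flips in the $\R_+^{\rm op}$ case, but the sandwich still holds.

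\emph{Step 3: passing to the limit.} Both bounds are given by the finite formula. Since all ratios lie in a cell of diameter $<\varepsilon$ inside $B$, where the integrand $t\mapsto t_1^{\alpha_1}\cdots t_{d-1}^{\alpha_{d-1}}$ is uniformly continuous, each bound equals $\int_{f^{-1}(C_i)} f^{\mathuline{\alpha}}\,dp^{(d)}$ up to an error $O(\varepsilon)\,p^{(d)}(f^{-1}(C_i))$. Summing over $i$ and letting $\varepsilon\to0$ yields \eqref{eq:temperateform}. For derivations the same sandwich applies, using monotonicity and additivity of $\Delta$. The finite formula \eqref{eq:derivationform}, evaluated on $\|P_i\|$ and on $W_i$, is a Riemann-type sum for $\int \frac{dp^{(k)}}{dp^{(d)}}\log\frac{dp^{(k)}/dp^{(d)}}{dp^{(\ell)}/dp^{(d)}}\,dp^{(d)}$, which is continuous on $B$. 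This gives \eqref{eq:derivationform}, with the $\gamma_\ell\geq0$ inherited from the finite classification.

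I expect the main obstacle to be Step 1's bookkeeping rather than the analysis. The finite-case literature states its classification under specific normalizations, such as full support and the choice of reference measure. I must check that its parameter region matches exactly the one claimed here (case (i): one $\alpha_\ell>1$ and the others $\leq0$; case (ii): all $\alpha_\ell\geq0$). I also must check that nondegeneracy on $S^d$ transfers to the finite subsemiring. The degenerate case $\mathuline{\alpha}=e_\ell$ is the delicate point, and the plan handles it by showing that Steps 2 and 3 would propagate $\Phi=\|\cdot\|_{(\ell)}$ to all of $S^d$.
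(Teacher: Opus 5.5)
Your proof is correct, and its upper bound is obtained by a genuinely different route from the paper's.

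\textbf{What is shared.} Both proofs rest on the finite classification. Both use the same lower bound: coarse-graining $P$ onto a finite partition adapted to the likelihood ratios, so the paper's $P_n$ plays the role of your $\boxplus_i\|P_i\|$. Both also discard the degenerate parameter $\mathuline{\alpha}=e_\ell$ at the end by nondegeneracy.

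\textbf{The paper's upper bound.} The paper approximates $dp^{(\ell)}/dp^{(d)}$ from below by simple functions and dominates $P\preceq\tilde P_n\boxplus R_n$. Here $\tilde P_n$ has piecewise constant ratios and $R_n$ is a remainder of mass of order $1/n$. It controls $\Phi(R_n)$ through $S_n\preceq R_n\preceq S_n\boxtimes V_\lambda$ (Lemma~\ref{lemma:apu}) and concludes with monotone and dominated convergence. This uses the sign pattern of $\mathuline{\alpha}$: after relabelling, $\alpha_d\geq1$ and $\alpha_1,\ldots,\alpha_{d-1}\leq0$, which is needed both for $\Phi(S_n)\to0$ and for the monotone convergence step. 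Cases (ii) and (iii) are then handled by analogy.

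\textbf{Your upper bound.} Your vertex dilation $W_i\succeq P_i$ is a localized form of the barycentric kernel behind Lemma~\ref{lemma:apu}. That lemma is the same construction for the single simplex spanned by the columns of $V_\lambda$, and it is close to what the paper itself does cell by cell with $V_{\lambda_n}$ in Proposition~\ref{prop:tropical}. Your version produces two explicit finite sums, each within $\omega(\varepsilon)\,p^{(d)}(X)$ of the integral, where $\omega$ is the modulus of continuity of the integrand on the compact box. As a result (i), (ii) and (iii) are treated uniformly, with no sign bookkeeping, no remainder term and no convergence theorems. That is a cleaner argument than the paper's.

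\textbf{A detail to make explicit.} Take the cells to be convex polytopes, for example half-open boxes with multilinear interpolation weights, or simplices with barycentric coordinates. This guarantees that measurable nonnegative weights $w_j$ summing to one with $\sum_j w_j v_j=f$ exist. It also guarantees that the cell averages $\big(p^{(k)}(E_i)/p^{(d)}(E_i)\big)_k$ lie in $\overline{C_i}$, which your lower-bound estimate needs.

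\textbf{What the paper adds.} The paper's proof also establishes the converse. Every $\Phi_{\mathuline{\alpha}}$ in the stated range is monotone, by convexity of $f_{\mathuline{\alpha}}$ and Csisz\'{a}r's data-processing argument. The cone generated by the Kullback--Leibler divergences consists of monotone derivations. The proposition as stated does not require this. However, the paper relies on it immediately afterwards to identify $\Sigma(S^d,\R_+)$, $\Sigma(S^d,\R_+^{\rm op})$ and $\mf D^k(S^d)$ exactly, so you would need to add it if you want those set equalities.
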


\begin{proof}
Let us start with case (i). We fix $\Phi\in\Sigma(S^d,\R_+)$, a standard Borel $(X,\mc A)$, a non-zero $P=\big(p^{(1)},\ldots,p^{(d)}\big)\in V_{(X,\mc A)}$, and $R>0$ such that $P$ satisfies \eqref{eq:finite}. From our earlier work \cite{Farooq_et_al_2024,Verhagen_et_al_2024}, we know that there is $\mathuline{\alpha}=(\alpha_1,\ldots,\alpha_d)\in\R^d$ such that $\alpha_1+\cdots+\alpha_d=1$ and there is $\ell\in\{1,\ldots,d\}$ such that $\alpha_\ell\geq1$ and $\alpha_m\leq0$ when $m\neq\ell$ so that, when $Y$ is finite and $\mc B=2^Y$ (in which case, with some abuse of notation, we identify measures $\mu:\mc B\to\R_+$ with functions $\mu:Y\to\R_+$) and $Q=\big(q^{(1)},\ldots,q^{(d)}\big)\in V_{(Y,\mc B)}$,
\begin{equation}\label{eq:finitetemperate}
\Phi(Q)=\sum_{y\in Y}q^{(1)}(y)^{\alpha_1}\cdots q^{(d)}(y)^{\alpha_d}.
\end{equation}
We may freely assume that $\alpha_d\geq1$, so that
$$
\Phi(Q)=\sum_{y\in Y}\left(\frac{q^{(1)}(y)}{q^{(d)}(y)}\right)^{\alpha_1}\cdots\left(\frac{q^{(d-1)}(y)}{q^{(d)}(y)}\right)^{\alpha_{d-1}}q^{(d)}(y)
$$
where we tacitly assume that $q^{(\ell)}(y)>0$ for all $y\in Y$ and $\ell\in\{1,\ldots,d\}$.

For any $n\in\N$ sufficiently large (e.g.,\ $n>R$), we consider a finite partition $\{A_{n,i}\}_{i=1}^{m_n}\subseteq\mc A$ of $P$-almost all of $X$ and $a^{(\ell)}_{n,i}\in\R_+$ such that, for any $i\in\{1,\ldots,m_n\}$, $1/(n+1)\leq dp^{(\ell)}/dp^{(d)}-a^{(\ell)}_{n,i}\leq 1/n$ $P$-a.e on $A_{n,i}$. All this is possible (in particular the finiteness of the partition) because of the property \eqref{eq:finite} for $P$. We may thus define the simple functions $f^{(\ell)}_n:X\to\R_+$, $\ell=1,\ldots,d-1$,
$$
f^{(\ell)}_n=\sum_{i=1}^{m_n}a^{(\ell)}_{n,i}\chi_{A_{n,i}}\nearrow \frac{dp^{(\ell)}}{dp^{(d)}}\quad P\textrm{-a.e.}
$$
as $n\to\infty$. Here, as well as in the sequel, for any $A\in\mc A$, $\chi_A$ is the characteristic function of $A$, i.e.,\ $\chi_A(x)=1$ if $x\in A$ and otherwise $\chi_A(x)=0$. Let us define
\begin{align*}
\tilde{P}_n&=\left(\tilde{p}^{(1)}_n,\ldots.\tilde{p}^{(d-1)}_n,\left[1-\frac{1}{n}\right]p^{(d)}\right)\in V_{(X,\mc A)},\\
\tilde{p}^{(\ell)}_n(A)&=\int_A f^{(\ell)}_n\,dp^{(d)},\quad A\in\mc A,\\
R_n&=\left(r^{(1)}_n,\ldots,r^{(d-1)}_n,\frac{1}{n}p^{(d)}\right)\in V_{(X,\mc A)},\\
r^{(\ell)}_n(A)&=\int_A \left(\frac{dp^{(\ell)}}{dp^{(d)}}-f^{(\ell)}_n\right)\,dp^{(d)},\quad A\in\mc A.
\end{align*}
We now easily have, for all $n\in\N$ sufficiently large,
$$
P=\big(\tilde{p}^{(1)}_n+r^{(1)}_n,\ldots,\tilde{p}^{(d-1)}_n+r^{(d-1)}_n,p^{(d)}\big)\preceq \tilde{P}_n\boxplus R_n.
$$

Also, for all $n\in\N$, we define $Y_n:=\{1,\ldots,m_n\}$ and $\mc B_n:=2^{Y_n}$ and $P_n=\big(p^{(1)}_n,\ldots,p^{(d)}_n\big)\in V_{(Y_n,\mc B_n)}$, where $p^{(\ell)}_n(y)=p^{(\ell)}(A_{n,y})$ for all $\ell\in\{1,\ldots,d-1\}$ and $y\in Y_n$. Clearly, $P\succeq P_n$ for any $n\in\N$. This means that, for any $n\in\N$ sufficiently large,
\begin{equation}\label{eq:ekaraj}
P_n\preceq P\preceq \tilde{P}_n\boxplus R_n,
\end{equation}
so that
\begin{equation}\label{eq:Pestimate}
\Phi(P_n)\leq\Phi(P)\leq\Phi(\tilde{P}_n)+\Phi(R_n).
\end{equation}
Our strategy is to show that $\Phi(R_n)\to0$ as $n\to\infty$, so that $\Phi(P)=\lim_{n\to\infty}\Phi(P_n)=\lim_{n\to\infty}\Phi(\tilde{P}_n)$ implying \eqref{eq:temperateform} using the dominated convergence theorem. The same arguments used here largely apply also to cases (ii) and (iii) which is why we treat the latter cases in much less detail.

We may upper-bound the normalized version
$$
\hat{R}_n=\big(r^{(1)}_n(X)^{-1},\ldots,r^{(d-1)}_n(X)^{-1},n p^{(d)}(X)^{-1}\big)\boxtimes R_n=\big(\hat{r}^{(1)}_n,\ldots,\hat{r}_n^{(d)}\big)
$$
of $R_n$ with $V_\lambda$ of \eqref{eq:Vlambda} with some $\lambda\geq0$ which is independent of $n$. Since one may easily verify that
$$
\frac{n}{n+1}\leq\frac{d\hat{r}_n^{(\ell)}}{d\hat{r}_n^{(d)}}\leq\frac{n+1}{n}\quad P\mathrm{-a.e.}
$$
for $\ell=1,\ldots,d-1$, Lemma \ref{lemma:apu} tells us that $V_\lambda\succeq\hat{R}_n$ as long as
$$
\lambda\geq(d-1)\left(\frac{n+1}{n}\right)^2+\frac{n+1}{n}-d.
$$
Thus, considering the maximum value of the right-hand side of the above inequality (at $n=1$), with $\lambda\geq 3d-2$, we have $V_\lambda\succeq\hat{R}_n$ for all $n\in\N$; let us fix such $\lambda$ from now on. Thus, it follows that, denoting
$$
S_n:=\left(r^{(1)}_n(X),\ldots,r^{(d-1)}_n(X),\frac{1}{n}p^{(d)}(X)\right),
$$
which we consider as a tuple over the trivial measurable space, we have
\begin{equation}\label{eq:alayla}
S_n\preceq R_n\preceq S_n\boxtimes V_\lambda
\end{equation}
where the lower bound is obvious. Using the fact that, for all $\ell\in\{1,\ldots,d-1\}$,
$$
\frac{1}{n+1}\leq r^{(\ell)}_n(X)\leq\frac{1}{n}
$$
and recalling that $\alpha_1,\ldots,\alpha_{d-1}\leq0$, we may easily estimate
$$
\frac{1}{n}p^{(d)}(X)^{\alpha_d}\leq \Phi(S_n)\leq \frac{2^{\alpha_d-1}}{n}p^{(d)}(X)^{\alpha_d},
$$
implying that $\Phi(S_n)\to0$ as $n\to\infty$. Using the monotonicity of $\Phi$ and \eqref{eq:alayla}, we find
$$
\underbrace{\Phi(S_n)}_{\to0}\leq\Phi(R_n)\leq\underbrace{\Phi(S_n)}_{\to0}\cdot\underbrace{\Phi(V_\lambda)}_{\rm constant},
$$
showing that $\Phi(R_n)\to0$ as $n\to\infty$.

Let us define, for all $\ell\in\{1,\ldots,d-1\}$ and $n\in\N$, the simple function $g^{(\ell)}_n:X\to\R_+$,
$$
g^{(\ell)}_n=\sum_{i=1}^{m_n}\frac{p^{(\ell)}(A_{n,i})}{p^{(d)}(A_{n,i})}\chi_{A_{n,i}}.
$$
and measure $q^{(\ell)}_n:\mc A\to\R_+$, $q^{(\ell)}_n(A)=\int_A g^{(\ell)}_n\,dp^{(d)}$ for all $A\in\mc A$. Defining $Q_n:=\big(q^{(1)}_n,\ldots,q^{(d-1)}_n,p^{(d)}\big)$ and the measures $p^{(d)}_{A_{n,i}}$ as the restrictions of $p^{(d)}$ onto the restriction of $\mc A$ onto $A_{n,i}$, we observe
\begin{align*}
\Phi(Q_n)&=\Phi\left(\bigboxplus_{i=1}^{m_n}\left(\frac{p^{(1)}(A_{n,i})}{p^{(d)}(A_{n,i})},\ldots,\frac{p^{(d-1)}(A_{n,i})}{p^{(d)}(A_{n,i})},1\right)\boxtimes\big(p^{(d)}_{A_{n,i}},\ldots,p^{(d)}_{A_{n,i}}\big)\right)\\
&=\sum_{i=1}^{m_n}\Phi\left(\frac{p^{(1)}(A_{n,i})}{p^{(d)}(A_{n,i})},\ldots,\frac{p^{(d-1)}(A_{n,i})}{p^{(d)}(A_{n,i})},1\right)\cdot\Phi\big(p^{(d)}_{A_{n,i}},\ldots,p^{(d)}_{A_{n,i}}\big)\\
&=\sum_{i=1}^{m_n}\left(\frac{p^{(1)}(A_{n,i})}{p^{(d)}(A_{n,i})}\right)^{\alpha_1}\cdots\left(\frac{p^{(d-1)}(A_{n,i})}{p^{(d)}(A_{n,i})}\right)^{\alpha_{d-1}}p^{(d)}(A_{n,i})=\Phi(P_n).
\end{align*}
Whenever $x\in A_{n,i}$, we easily find
$$
g^{(\ell)}_n(x)-f^{(\ell)}_n(x)=\frac{1}{p^{(d)}(A_{n,i})}\int_{A_{n,i}}\underbrace{\bigg(\frac{dp^{(\ell)}}{dp^{(d)}}-a^{(\ell)}_{n,i}\bigg)}_{\in[1/(n+1),1/n]}\,dp^{(d)},
$$
implying that, as $f^{(\ell)}_n\nearrow dp^{(\ell)}/dp^{(d)}$ $P$-a.e.\ as $n\to\infty$, also $g^{(\ell)}_n\to dp^{(\ell)}/dp^{(d)}$ $P$-a.e.\ as $n\to\infty$. Thus, also
$$
\left.\begin{array}{r}
f^{(1)}_n(x)^{\alpha_1}\cdots f^{(d-1)}_n(x)^{\alpha_{d-1}}\\
g^{(1)}_n(x)^{\alpha_1}\cdots g^{(d-1)}_n(x)^{\alpha_{d-1}}
\end{array}\right\}\ \to\ \frac{dp^{(1)}}{dp^{(d)}}(x)^{\alpha_1}\cdots\frac{dp^{(d-1)}}{dp^{(d)}}(x)^{\alpha_{d-1}}\quad \textrm{for}\ P\textrm{-a.a.}\ x\in X
$$
as $n\to\infty$. Now, recalling that $\alpha_1,\ldots,\alpha_{d-1}\leq0$ and using the monotone convergence theorem, we have
\begin{align*}
\Phi(\tilde{P}_n)&=\Phi\left(\bigboxplus_{i=1}^{m_n}\left(a^{(1)}_{n,i},\ldots,a^{(d-1)}_{n,i},1-\frac{1}{n}\right)\boxtimes\big(p^{(d)}_{A_{n,i}},\ldots,p^{(d)}_{A_{n,i}}\big)\right)\\
&=\sum_{i=1}^{m_n}\big(a^{(1)}_{n,i}\big)^{\alpha_1}\cdots\big(a^{(d-1)}_{n,i}\big)^{\alpha_{d-1}}\left(1-\frac{1}{n}\right)^{\alpha_d}p^{(d)}(A_{n,i})\\
&=\left(1-\frac{1}{n}\right)^{\alpha_d}\int_X f^{(1)}_n(x)^{\alpha_1}\cdots f^{(d-1)}_n(x)^{\alpha_{d-1}}\,dp^{(d)}(x)\\
&\to\int_X \frac{dp^{(1)}}{dp^{(d)}}(x)^{\alpha_1}\cdots\frac{dp^{(d-1)}}{dp^{(d)}}(x)^{\alpha_{d-1}}\,dp^{(d)}(x)\quad\textrm{as}\ n\to\infty.
\end{align*}
Since the functions $g^{(\ell)}_n$ oscillate with a decreasing bound around $dp^{(\ell)}/dp^{(d)}$, we may prove that
$$
\Phi(P_n)=\Phi(Q_n)\to \int_X \frac{dp^{(1)}}{dp^{(d)}}(x)^{\alpha_1}\cdots\frac{dp^{(d-1)}}{dp^{(d)}}(x)^{\alpha_{d-1}}\,dp^{(d)}(x)\quad\textrm{as}\ n\to\infty
$$
using the dominated convergence theorem. Thus, using \eqref{eq:Pestimate}, we get
$$
\underbrace{\Phi(P_n)-\Phi(\tilde{P}_n)}_{\to0}\leq \Phi(P)-\Phi(\tilde{P}_n)\leq\underbrace{\Phi(R_n)}_{\to0},
$$
so that
\begin{equation}\label{eq:monhomapu}
\Phi(P)=\lim_{n\to\infty}\Phi(\tilde{P}_n)=\int_X \frac{dp^{(1)}}{dp^{(d)}}(x)^{\alpha_1}\cdots\frac{dp^{(d-1)}}{dp^{(d)}}(x)^{\alpha_{d-1}}\,dp^{(d)}(x).
\end{equation}
If $\alpha_d=1$, we would have $\alpha_1=\cdots=\alpha_{d-1}=0$, so that $\Phi(P)=p^{(d)}(X)$ and $\Phi$ would be degenerate. Thus, $\alpha_d>1$.

Let us next show that any map $\Phi$ like that defined in \eqref{eq:monhomapu} with $\alpha_1,\ldots,\alpha_{d-1}\leq0$ is actually a monotone homomorphism in $\Sigma(S^d,\R_+)$; note that, as pointed out earlier, we may freely assume that $\alpha_d$ is the only positive (in fact $>1$) parameter within $\mathuline{\alpha}$. The fact that $\Phi$ is a semiring homomorphism is very easy, so let us concentrate on monotonicity. To aid our investigation, let us define the function $f_{\mathuline{\alpha}}:\R_{>0}\to\R$,
$$
f_{\mathuline{\alpha}}(x_1,\ldots,x_{d-1})=x_1^{\alpha_1}\cdots x_{d-1}^{\alpha_{d-1}}.
$$
Let us show that $f_{\mathuline{\alpha}}$ is convex. The Hessian of $f_{\mathuline{\alpha}}$ is given by
$$
\left(\frac{\partial^2 f_{\mathuline{\alpha}}}{\partial x_k\partial x_\ell}(x_1,\ldots,x_{d-1})\right)_{k,\ell=1}^{d-1}=f_{\mathuline{\alpha}}(x_1,\ldots,x_{d-1})\left(\frac{1}{x_k x_\ell}\right)_{k,\ell=1}^{d-1}\star M
$$
where $\star$ denotes the matrix-element-wise product (the Schur product) of matrices of the same shape and
$$
M:=\big(\alpha_k\alpha_\ell\big)_{k,\ell=1}^{d-1}-{\rm diag}(\alpha_1,\ldots,\alpha_{d-1}).
$$
Since $\alpha_k\leq 0$ for $k=1,\ldots,d-1$, we immediately see that $M$ is positive semi-definite. Since the Schur product of positive semi-definite matrices is also positive semi-definite, we find that the Hessian of $f_{\mathuline{\alpha}}$ is positive semi-definite, so that $f_{\mathuline{\alpha}}$ is convex. We now have that
$$
\Phi(P)=\int_X f_{\mathuline{\alpha}}\left(\frac{dp^{(1)}}{dp^{(d)}},\ldots,\frac{dp^{(d-1)}}{dp^{(d)}}\right)\,dp^{(d)},
$$
i.e.,\ up to a logarithm, $\Phi$ is a multivariate generalization of an $f$-divergence. The standard proof \cite{Csiszar67} of the data processing inequality for $f$-divergences defined by a convex function generalizes now to our case to show the monotonicity of $\Phi$. In the case where $P,Q\in V$, with $P\succeq Q$, do not consist of probability measures, notice that $\|P\|=\|Q\|=:(a_1,\ldots,a_d)$, so that $\hat{P}\succeq\hat{Q}$ for the normalized $\hat{P}:=(a_1^{-1},\ldots,a_d^{-1})\boxtimes P$ and $\hat{Q}:=(a_1^{-1},\ldots,a_d^{-1})\boxtimes Q$ so that
$$
\Phi(P)=f_{\mathuline{\alpha}}\left(\frac{a_1}{a_d},\ldots,\frac{a_{d-1}}{a_d}\right)\Phi(\hat{P})\geq f_{\mathuline{\alpha}}\left(\frac{a_1}{a_d},\ldots,\frac{a_{d-1}}{a_d}\right)\Phi(\hat{Q})=\Phi(Q).
$$

The proof in case (ii) can be done in essentially the same way as above. In this case, we use the fact established in our earlier work \cite{Farooq_et_al_2024,Verhagen_et_al_2024} that the equation \eqref{eq:finitetemperate} still holds for finite tuples, but now all the parameters $\alpha_\ell$ are non-negative; we may naturally assume that $\alpha_d>0$ since $\alpha_1+\cdots+\alpha_d=1$ and proceed as above. The only difference is that the upper and lower bounds in the inequalities are exchanged.

Let us go on to proving the claim in case (iii) and fix $k\in\{1,\ldots,d\}$ and $\Delta\in\mf D^k(S^d)$. We may freely assume that $k=d$. From our earlier work \cite{Farooq_et_al_2024,Verhagen_et_al_2024}, we know that there are $\gamma_\ell\geq0$, out of which at least one $\gamma_\ell$ is non-zero, such that, when $Y$ is finite and $\mc B=2^Y$ (in which case, with some abuse of notation, we identify measures $\mu:\mc B\to\R_+$ with functions $\mu:Y\to\R_+$) and $Q=\big(q^{(1)},\ldots,q^{(d)}\big)\in V_{(Y,\mc B)}$, we have
\begin{align*}
\Delta(Q)&=\sum_{\ell=1}^{d-1}\gamma_\ell D_{\rm KL}\big(q^{(d)}\big\|q^{(\ell)}\big)\\
&=\sum_{\ell=1}^{d-1}\gamma_\ell\sum_{y\in Y} q^{(d)}(y)\log{\frac{q^{(d)}(y)}{q^{(\ell)}(y)}}.
\end{align*}
We may proceed exactly in the same way as in case (i) for $P\big(p^{(1)},\ldots,p^{(d)}\big)\in V_{(X,\mc A)}$ satisfying \eqref{eq:finite} with $R>0$ where \eqref{eq:ekaraj} now implies
\begin{equation}\label{eq:derivestimate}
\Delta(P_n)\leq\Delta(P)\leq\Delta(\tilde{P}_n)+\Delta(R_n)
\end{equation}
and, using the Leibniz rule and fixing $\lambda\geq 3d-2$ as before, we get
$$
\Delta(S_n)\leq\Delta(R_n)\leq\Delta(S_n)+\frac{1}{n}p^{(d)}(X)\Delta(V_\lambda).
$$
Recalling that $1/(n+1)\leq r^{(\ell)}_n(X)\leq 1/n$, we have
$$
\frac{1}{n}\sum_{\ell=1}^{d-1}\gamma_\ell p^{(d)}(X)\log{p^{(d)}(X)}\leq\Delta(S_n)\leq\frac{1}{n}\sum_{\ell=1}^{d-1}\gamma_\ell p^{(d)}(X)\log{\left(\frac{n+1}{n}\cdot p^{(d)}(X)\right)},
$$
implying that $\Delta(S_n)\to0$ as $n\to\infty$, so that also $\Delta(R_n)\to0$ as $n\to\infty$. Using the dominated convergence theorem, we may also prove similarly as in case (i) that
$$
\lim_{n\to\infty}\Delta(P_n)=\lim_{n\to\infty}\Delta(\tilde{P}_n)=\sum_{\ell=1}^{d-1}\gamma_\ell D_{\rm KL}\big(p^{(d)}\big\|p^{(\ell)}\big),
$$
so that, by \eqref{eq:derivestimate}, we have
$$
\Delta(P)=\lim_{n\to\infty}\Delta(\tilde{P}_n)=\sum_{\ell=1}^{d-1}\gamma_\ell D_{\rm KL}\big(p^{(d)}\big\|p^{(\ell)}\big).
$$
On the other hand, all the maps $\Delta$ as defined in \eqref{eq:derivationform} are clearly monotone as the Kullback-Leibler relative entropy satisfies the data processing inequality. Thus, we have completely characterized the derivations in $\mf D^k(S^d)$, $k=1,\ldots,d$.
\end{proof}

From now on, we define, for any $\mathuline{\alpha}=(\alpha_1,\ldots,\alpha_d)\in\R^d$ such that $\alpha_1+\cdots+\alpha_d=1$, the function $\Phi_{\mathuline{\alpha}}:S^d\to\R_+$ through
\begin{equation}\label{eq:Phialpha}
\Phi_{\mathuline{\alpha}}(P)=\int_X \frac{dp^{(1)}}{dp^{(d)}}(x)^{\alpha_1}\cdots\frac{dp^{(d-1)}}{dp^{(d)}}(x)^{\alpha_{d-1}}\,dp^{(d)}(x)
\end{equation}
for all standard Borel $(X,\mc A)$ and $P=\big(p^{(1)},\ldots,p^{(d)}\big)\in V_{(X,\mc A)}$. Proposition \ref{prop:TemperateDeriv} tells us that
\begin{align*}
\Sigma(S^d,\R_+)&=\bigcup_{k=1}^d \{\Phi_{\mathuline{\alpha}}\,|\,\alpha_1+\cdots+\alpha_d=1,\ \alpha_k>1,\ \alpha_\ell\leq0\ \textrm{whenever}\ \ell\neq k\},\\
\Sigma(S^d,\R_+^{\rm op})&=\{\Phi_{\mathuline{\alpha}}\,|\,\alpha_1+\cdots+\alpha_d=1,\ \alpha_1,\ldots,\alpha_d\geq0\}.
\end{align*}
Moreover, for any $k\in\{1,\ldots,d\}$, the set $\mf D^k(S^d)$ of derivations coincides with the positive cone generated by the Kullback-Leibler relative entropies of the $k$'th measure with the others, i.e.,
$$
\mf D^k(S^d)=\textrm{cone}\,\big\{\big(p^{(1)},\ldots,p^{(d)}\big)\mapsto D_{\rm KL}\big(p^{(k)}\big\|p^{(\ell)}\big)\,\big|\,\ell\neq k\big\}.
$$
Indeed, we have seen that derivations within $\mf D^k(S^d)$ are $\Delta^{(k)}_{\mathuline{\gamma}}$ for $\mathuline{\gamma}=(\gamma_1,\ldots,\gamma_d)\in\R_+^d$,
\begin{equation}\label{eq:GammaDeriv}
\Delta^{(k)}_{\mathuline{\gamma}}(P)=\sum_{\ell:\,\ell\neq k}\gamma_\ell D_{\rm KL}\big(p^{(k)}\big\|p^{(\ell)}\big)
\end{equation}
for all $P=\big(p^{(1)},\ldots,p^{(d)}\big)\in V$.

\begin{proposition}\label{prop:tropical}
Suppose that $\Phi\in\Sigma(S^d,\T\R_+)$. There is $\mathuline{\beta}=(\beta_1,\ldots,\beta_d)\in\R^d$ where $\beta_k=1$ for one $k\in\{1,\ldots,d\}$, $\beta_\ell\leq0$ whenever $\ell\neq k$, $\sum_{\ell:\,\ell\neq k}\beta_\ell=-1$ and
\begin{equation}
\Phi(P)=\underset{x\in X}{P{\rm -ess\,sup}}\,\prod_{\ell:\,\ell\neq k}\frac{dp^{(\ell)}}{dp^{(k)}}(x)^{\beta_\ell}
\end{equation}
for all $P=\big(p^{(1)},\ldots,p^{(d)}\big)\in V$. Moreover, there are no nondegenerate monotone homomorphisms $\Phi:S^d\to\T\R_+^{\rm op}$, i.e.,\ $\Sigma(S^d,\T\R_+^{\rm op})=\emptyset$.
\end{proposition}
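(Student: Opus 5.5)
Following the proof of Proposition \ref{prop:TemperateDeriv}, the plan is to reduce to the finite-outcome case and pass to the limit. The finite-outcome results of \cite{Farooq_et_al_2024,Verhagen_et_al_2024} give the restriction of $\Phi\in\Sigma(S^d,\T\R_+)$ to experiments over finite sample spaces. It has the form
$$
\Phi(Q)=\max_{y\in Y}\prod_{\ell=1}^d q^{(\ell)}(y)^{\beta_\ell}=\max_{y\in Y}\,q^{(k)}(y)\prod_{\ell\neq k}\left(\frac{q^{(\ell)}(y)}{q^{(k)}(y)}\right)^{\beta_\ell},
$$
with $\beta_k=1$, $\beta_\ell\leq0$ for $\ell\neq k$, and $\sum_{\ell}\beta_\ell=0$. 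The last condition is forced by the unit condition and by compatibility with the degenerate homomorphism $\|\cdot\|$: scaling all coordinates by $t$ must scale $\Phi$ by $t^{\sum\beta_\ell}$ while $\Phi(t,\ldots,t)$ is multiplicative and equals $\Phi$ of something $\sim$ its normalization. The same results show that there are no nondegenerate monotone homomorphisms into $\T\R_+^{\rm op}$ on the finite subsemiring. The density form in the statement follows because the factor $q^{(k)}(y)$ is dropped in the limit, as explained below.

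For the continuous case, take $P\in V_{(X,\mc A)}$ and reuse the construction of Proposition \ref{prop:TemperateDeriv} with $k$ in place of $d$. This gives finite partitions $\{A_{n,i}\}$, the coarse-grainings $P_n\preceq P$, and the decomposition $P\preceq\tilde P_n\boxplus R_n$. In $\T\R_+$, addition is $\max$, so
$$
\Phi(P_n)\leq\Phi(P)\leq\max\{\Phi(\tilde P_n),\Phi(R_n)\}.
$$
The key point is how $\Phi$ behaves on a piece of the form $(c_1,\ldots,c_d)\boxtimes(\mu,\ldots,\mu)$, where $\mu$ is the restriction of $p^{(k)}$ to some $A$. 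By the degeneracy argument, $(\mu,\ldots,\mu)$ is $\sim$-equivalent to the scalar tuple $(\mu(A),\ldots,\mu(A))$. A scalar tuple $(t,\ldots,t)$ gets $\Phi$-value $t^{\sum\beta_\ell}=1$. On the other side, $\Phi$ of the constant $(\mu,\ldots,\mu)$ can be squeezed between values on $(\mu(A),\ldots,\mu(A))$ and on a refinement. A point mass dominates, via $\succeq$, any tuple of identical measures of the same mass, and conversely; hence the piece has value $\prod_\ell c_\ell^{\beta_\ell}$. So $\Phi(\tilde P_n)=\max_i\prod_{\ell\neq k}(a^{(\ell)}_{n,i})^{\beta_\ell}$ times a factor tending to $1$, which converges to the essential supremum of $\prod_{\ell\neq k}(dp^{(\ell)}/dp^{(k)})^{\beta_\ell}$. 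The same holds for $\Phi(P_n)$, with cell averages in place of $a^{(\ell)}_{n,i}$; uniform convergence holds since the oscillation is at most $1/n$ and the densities are bounded away from $0$ by regularity.

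The step I expect to be the main obstacle is controlling $\Phi(R_n)$. In the temperate case $\Phi(R_n)\to0$, but in the tropical case it does not: $\Phi(S_n)$ is bounded by $\prod_\ell r^{(\ell)}_n(X)^{\beta_\ell}$ with ratios tending to $1$, hence of order $1$. I would therefore modify the decomposition. Instead of using $R_n$ with small mass, use the refined version of Lemma \ref{lemma:apu} stated after its proof to bound $\hat R_n$ by $V_\lambda$ with $\lambda$ chosen depending on $n$. The densities of $\hat R_n$ lie within factor $(n+1)/n$ of each other, so $\lambda\to\infty$ is allowed, and $V_\lambda$ converges to a tuple whose $\Phi$-value tends to $1$. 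Thus $\Phi(R_n)\leq\Phi(S_n)\Phi(V_{\lambda_n})\to\prod_\ell\lim r^{(\ell)}_n(X)^{\beta_\ell}$-type quantities. These are at most $1$ in the normalized comparison. I would then check that this bound never exceeds the essential supremum, which is $\geq1$ because the ratio of integrals equals $1$ and Jensen applies to the convex $f$ with $\beta$ exponents. Alternatively, I would try choosing the $a^{(\ell)}_{n,i}$ as a multiplicative fraction so that $R_n$ is proportional to $\tilde P_n$, which avoids the issue.

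Finally, monotonicity of the resulting formula follows from convexity of $x\mapsto\prod x_\ell^{\beta_\ell}$, as in Proposition \ref{prop:TemperateDeriv}. Under a Markov kernel, the output densities are conditional averages of the input densities, and a convex function of an average is bounded by the essential sup. Nondegeneracy forces $\beta\neq e_k$. For $\T\R_+^{\rm op}$, restricting a putative $\Phi$ to finite experiments yields a nondegenerate homomorphism of the finite semiring unless the restriction is degenerate. The finite results exclude the former. In the latter case, the approximation $P_n\preceq P\preceq\tilde P_n\boxplus R_n$, read with reversed order, forces $\Phi(P)$ to be pinned by degenerate values, so $\Phi$ would be degenerate.
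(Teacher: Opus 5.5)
Your argument is sound in substance, but it follows a different route from the paper at each stage.

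\textbf{Lower bound.} You use $\Phi(P)\geq\Phi(P_n)$ for the coarse-grainings $P_n$, together with the uniform closeness of cell averages to the densities. This is more direct than the paper. The paper first shows $\Phi(P)\geq\sup_{A\in\mc A}\prod_{\ell\neq k}\big(p^{(\ell)}(A)/p^{(k)}(A)\big)^{\beta_\ell}$. It then identifies this supremum with the essential supremum $G$ by running along the path $\mathuline{\alpha}^\lambda$ of temperate homomorphisms and taking limits of $L^s$-norms.

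\textbf{Upper bound.} You keep the decomposition $P\preceq\tilde P_n\boxplus R_n$ of Proposition \ref{prop:TemperateDeriv} and show $\Phi(R_n)\leq\Phi(S_n)\Phi(V_{\lambda_n})\to1$. This only gives $\Phi(P)\leq\max\{G,1\}$. It therefore forces you to normalize $P$, use Jensen to get $G\geq1$, and rescale afterwards. You correctly identified the obstacle: in $\T\R_+$ a remainder is never negligible. The paper's answer is to avoid a remainder altogether. It writes $P=\bigboxplus_i\big(p^{(1)}(A_{n,i}),\ldots,p^{(d)}(A_{n,i})\big)\boxtimes P_{n,i}$ with normalized cell restrictions $P_{n,i}\preceq V_{\lambda_n}$, $\lambda_n\to0$. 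This yields $\Phi(P)\leq(1+\lambda_n)G$ for every $P\in V$ at once. Combining your lower bound with this cellwise upper bound (bounding cell values by $G$ via Jensen or your uniform approximation) gives the shortest proof.

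\textbf{Monotonicity of the formula.} Your conditional-Jensen argument is a valid, self-contained alternative. The paper instead observes that the formula is a pointwise limit of the monotone maps $P\mapsto\big(\Phi_{\mathuline{\alpha}^\lambda}(P)/p^{(k)}(X)\big)^{1/(\lambda-1)}$.

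\textbf{The $\T\R_+^{\rm op}$ case.} You lean on the finite-outcome classification to exclude a nondegenerate restriction. Your sandwich in the degenerate case does work, using $\Phi(V_\lambda)=\Phi(1,\ldots,1)=1$ and $\Phi(\mu,\ldots,\mu)=\Phi(\mu(A),\ldots,\mu(A))$. The paper does not invoke the finite classification here. It proves $\Phi(a,\ldots,a)=1$ directly, then uses $\Phi(tU+(1-t)V)\geq\max\{\Phi(U),\Phi(V)\}$ with the mixtures $P=tP_0+(1-t)U$ and $P_0=tP+(1-t)V$ to force $\Phi(P)=1$. That argument needs no external input.

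Three slips need correcting.

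First, since $\beta_k=1$ and $\sum_\ell\beta_\ell=0$, one has $\prod_\ell q^{(\ell)}(y)^{\beta_\ell}=\prod_{\ell\neq k}\big(q^{(\ell)}(y)/q^{(k)}(y)\big)^{\beta_\ell}$ exactly. Your first display carries a spurious factor $q^{(k)}(y)$; that is the temperate bookkeeping, where $\sum_\ell\alpha_\ell=1$. Nothing is ``dropped in the limit'', and with that factor $\Phi(P_n)$ would vanish under refinement for nonatomic $P$.

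Second, $\lambda\to\infty$ is backwards, since $\Phi(V_\lambda)=1+\lambda$. Lemma \ref{lemma:apu} applied to $\hat R_n$, whose density ratios lie in $[n/(n+1),(n+1)/n]$, gives the admissible $\lambda_n=(d-1)(1+1/n)^2+(1+1/n)-d\to0$, which is what your bound needs. Relatedly, $\Phi(S_n)\in[1,1+1/n]$ rather than being at most $1$.

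Third, the passage from normalized to general $P$ must be stated explicitly. For unnormalized $P$ the bound $\max\{G,1\}$ can exceed $G$: for example $d=2$, $\mathuline{\beta}=(-1,1)$, $P=(2\mu,\mu)$ gives $G=1/2$. The passage follows from $\Phi\big((a_1,\ldots,a_d)\boxtimes\hat P\big)=\prod_\ell a_\ell^{\beta_\ell}\Phi(\hat P)$ and the identical scaling of the essential supremum.
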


\begin{proof}
From our earlier work \cite{Farooq_et_al_2024,Verhagen_et_al_2024}, we know that there are $k\in\{1,\ldots,d\}$ and $\beta_\ell\leq0$ for $\ell\neq k$ (and $\beta_k=1$) as in the statement such that, whenever $Y$ is finite and $\mc B=2^Y$, then
$$
\Phi(Q)=\max_{y\in Y}\prod_{\ell:\,\ell\neq k}\left(\frac{q^{(\ell)}(y)}{q^{(k)}(y)}\right)^{\beta_\ell}
$$
for any $Q=\big(q^{(1)},\ldots,q^{(d)}\big)\in V_{(Y,\mc B)}$ where we identify the measures $q^{(\ell)}$ with functions $q^{(\ell)}:Y\to\R_+$. Naturally, we may assume that $k=d$ as we shall do in the sequel.

Let us fix a standard Borel $(X,\mc A)$ and $P=\big(p^{(1)},\ldots,p^{(d)}\big)\in V_{(X,\mc A)}$ for the duration of this proof. Denote $\mathuline{\beta}:=(\beta_1,\ldots,\beta_{d-1})$ and define the function $g^{\mathuline{\beta}}_P:X\to\R_+$,
$$
g^{\mathuline{\beta}}_P(x)=\frac{dp^{(1)}}{dp^{(d)}}(x)^{\beta_1}\cdots\frac{dp^{(d-1)}}{dp^{(d)}}(x)^{\beta_{d-1}}.
$$
Let us denote by $G$ the $P$-essential supremum of $g^{\mathuline{\beta}}_P$ (which we will ultimately show to coincide with $\Phi(P)$). We denote, for each $A\in\mc A$ by $P_A$ the restriction of $P$ onto $A$, i.e.,\ $P_A=\big(p^{(1)}_A,\ldots,p^{(d)}_A\big)$ where $p^{(\ell)}_A:\mc A|_A\to\R_+$ is the restriction of $p^{(\ell)}$ onto the $\sigma$-algebra $\mc A|_A:=\{A\cap B\,|\,B\in\mc A\}$. Thus, we have $P=\bigboxplus_{A\in\mc P}P_A$ for any finite partition $\mc P\subseteq\mc A$. Thus, for any finite partition $\mc P\subseteq\mc A$, we have
$$
\Phi(P)=\max_{A\in\mc P}\Phi(P_A).
$$
Especially, for any $A\in\mc A$,
\begin{equation}
\Phi(P)\geq\Phi(P_A)\geq\Phi\big(p^{(1)}(A),\ldots,p^{(d)}(A)\big)=\left(\frac{p^{(1)}(A)}{p^{(d)}(A)}\right)^{\beta_1}\cdots\left(\frac{p^{(d-1)}(A)}{p^{(d)}(A)}\right)^{\beta_{d-1}}
\end{equation}
where the second inequality follows immediately from the monotonicity of $\Phi$. We next show that
\begin{equation}\label{eq:Gsup}
G=\sup_{A\in\mc A}\left(\frac{p^{(1)}(A)}{p^{(d)}(A)}\right)^{\beta_1}\cdots\left(\frac{p^{(d-1)}(A)}{p^{(d)}(A)}\right)^{\beta_{d-1}},
\end{equation}
so that $\Phi(P)\geq G$.

Let us define the path $\R_+\ni\lambda\mapsto\mathuline{\alpha}^\lambda=\big((\lambda-1)\beta_1,\ldots,(\lambda-1)\beta_{d-1},\lambda\big)\in\R^d$. We notice that, when $0\leq\lambda<1$, $\Phi_{\mathuline{\alpha}^\lambda}\in\Sigma(S^d,\R_+^{\rm op})$ and, when $1<\lambda<\infty$, $\Phi_{\mathuline{\alpha}^\lambda}\in\Sigma(S^d,\R_+)$. For any positive measure $\mu$ on a measurable space $(Y,\mc B)$ and $s\geq1$, we denote the $(\mu,s)$-norm of a $\mu$-measurable function $f:Y\to\R$ by $\|f\|_{\mu,s}:=\left(\int_Y|f|^s\,d\mu\right)^{1/s}$ whenever the integral exists. An application of H\"{o}lder's inequality tells us that, whenever $\mu(Y)<\infty$, $\mu(Y)^{-1/s}\|f\|_{\mu,s}\nearrow \|f\|_{\mu,\infty}:=\mu\textrm{-ess}\,\sup|f|$ as $s\to\infty$ whenever all the norms are defined. We thus have
\begin{align*}
 \lim_{\lambda\to\infty}\left(\frac{\Phi_{\mathuline{\alpha}^\lambda}(P)}{p^{(d)}(X)}\right)^{\frac{1}{\lambda-1}}&=\lim_{\lambda\to\infty}\left[\int_X\left(\frac{dp^{(1)}}{dp^{(d)}}(x)^{\beta_1}\cdots\frac{dp^{(d-1)}}{dp^{(d)}}(x)^{\beta_{d-1}}\right)^{\lambda-1}\,\frac{dp^{(d)}(x)}{p^{(d)}(X)}\right]^{\frac{1}{\lambda-1}}\\
 &=\lim_{s\to\infty}p^{(d)}(X)^{-1/s}\big\|g^{\mathuline{\beta}}_P\big\|_{p^{(d)},s}=\sup_{s>1}p^{(d)}(X)^{-1/s}\big\|g^{\mathuline{\beta}}_P\big\|_{p^{(d)},s}\\
 &=\big\|g^{\mathuline{\beta}}_P\big\|_{p^{(d)},\infty}=G.
\end{align*}
We may thus deduce $G=\sup_{\lambda>2}\big(\Phi_{\mathuline{\alpha}^\lambda}(P)/p^{(d)}(X)\big)^{\frac{1}{\lambda-1}}$.

Whenever $\mc P=\{A_1,\ldots,A_m\}\subseteq\mc A$ is a finite partition of $X$, we denote $Y_{\mc P}:=\{1,\ldots,m\}$ and $\mc B_{\mc P}:=2^{Y_{\mc P}}$. We again view regular tuples on the finite measurable space $(Y_{\mc P},\mc B_{\mc P})$ as $d$-tuples of functions $q^{(\ell)}:Y_{\mc P}\to\R_+$, $\ell=1,\ldots,d$. For any finite partition $\mc P=\{A_i\}_{i\in Y_{\mc P}}\subseteq\mc A$, we define $P|_{\mc P}=\big(p^{(1)}|_{\mc P},\ldots,p^{(d)}|_{\mc P}\big)\in V_{(Y_{\mc P},\mc B_{\mc P})}$ through $p^{(\ell)}|_{\mc P}(i)=p^{(\ell)}(A_i)$ for all $\ell\in\{1,\ldots,d\}$ and $i\in Y_{\mc P}$. When $\mc P=\{A_i\}_{i\in Y_{\mc P}}\subseteq\mc A$ is a finite partition, we may easily show similarly as above that
$$
\sup_{\lambda>2}\left(\frac{\Phi_{\mathuline{\alpha}^\lambda}(P|_{\mc P})}{p^{(d)}(X)}\right)^{\frac{1}{\lambda-1}}=\max_{i\in Y_{\mc P}}\left(\frac{p^{(1)}(A_i)}{p^{(d)}(A_i)}\right)^{\beta_1}\cdots\left(\frac{p^{(d-1)}(A_i)}{p^{(d)}(A_i)}\right)^{\beta_{d-1}}.
$$
Clearly $P\succeq P|_{\mc P}$ for any finite partition $\mc P$, so that $\Phi'(P)\geq\Phi'(P|_{\mc P})$ for any $\Phi'\in\Sigma(S^d,\R_+)$. However, perusing the proof of Proposition \ref{prop:TemperateDeriv}, we notice that, for any $\Phi'\in\Sigma(S^d,\R_+)$, there exists a sequence $(\mc P_n)_{n=1}^\infty$ of finite partitions such that $\Phi'(P)=\lim_{n\to\infty}\Phi'(P|_{\mc P_n})$; e.g.,\ the tuples $P_n$ of the proof of Proposition \ref{prop:TemperateDeriv} provide such a sequence. Thus, $\Phi'(P)=\sup_{\mc P}\Phi'(P|_{\mc P})$ for all $\Phi'\in\Sigma(S^d,\R_+)$ where the supremum runs over all finite partitions. Thus, using our observation above,
\begin{align*}
G&=\sup_{\lambda>2}\left(\frac{\Phi_{\mathuline{\alpha}^\lambda}(P)}{p^{(d)}(X)}\right)^{\frac{1}{\lambda-1}}=\sup_{\lambda>2}\sup_{\mc P}\left(\frac{\Phi_{\mathuline{\alpha}^\lambda}(P|_{\mc P})}{p^{(d)}(X)}\right)^{\frac{1}{\lambda-1}}=\sup_{\mc P}\sup_{\lambda>2}\left(\frac{\Phi_{\mathuline{\alpha}^\lambda}(P|_{\mc P})}{p^{(d)}(X)}\right)^{\frac{1}{\lambda-1}}\\
&=\sup_{\mc P}\max_{i\in Y_{\mc P}}\left(\frac{p^{(1)}(A_i)}{p^{(d)}(A_i)}\right)^{\beta_1}\cdots\left(\frac{p^{(d-1)}(A_i)}{p^{(d)}(A_i)}\right)^{\beta_{d-1}}\\
&=\sup_{A\in\mc A}\left(\frac{p^{(1)}(A)}{p^{(d)}(A)}\right)^{\beta_1}\cdots\left(\frac{p^{(d-1)}(A)}{p^{(d)}(A)}\right)^{\beta_{d-1}}.
\end{align*}
Thus, we have $\Phi(P)\geq G$. We next show that $\Phi(P)\leq G$.

For any $n\in\N$ (sufficiently large), we find a finite partition $\mc P_n=\{A_{n,1},\ldots A_{n,m_n}\}\subseteq\mc A$ and numbers $a^{(\ell)}_{n,i}$, $b^{(\ell)}_{n,i}$, $b^{(\ell)}_{n,i}\geq a^{(\ell)}_{n,i}$, $b^{(\ell)}_{n,i}-a^{(\ell)}_{n,i}\leq 1/n$, such that, for all $\ell\in\{1,\ldots,d-1\}$ and $i\in\{1,\ldots,m_n\}$,
$$
\frac{dp^{(\ell)}}{dp^{(d)}}(x)\in\big[a^{(\ell)}_{n,i},b^{(\ell)}_{n,i}\big]\quad\mathrm{for}\ P\mathrm{-a.a.}\ x\in A_{n,i}.
$$
Let us denote $P_{n,i}=\big(p^{(1)}_{n,i},\ldots,p^{(d)}_{n,i}\big)\in V_{(A_{n,i},\mc A|_{A_{n,i}})}$, $p^{(\ell)}_{n,i}(B)=p^{(\ell)}(A_{n,i})^{-1}p^{(\ell)}(B)$ for any $\mc A$-measurable $B\subseteq A_{n,i}$. With similar analysis as in the proof of Lemma \ref{lemma:apu}, we notice that, whenever
$$
\lambda\geq\frac{b^{(\ell)}}{a^{(\ell)}_{n,i}}\left(\sum_{m=1}^{d-1}\frac{b^{(m)}}{a^{(m)}_{n,i}}+1\right)-d\quad\forall \ell\in\{1,\ldots,d\},
$$
we have $V_\lambda\succeq P_{n,i}$. When $R$ is such that $P$ satisfies \eqref{eq:finite}, we notice that $\lambda_n\geq0$ that satisfies the above condition for all $\ell\in\{1,\ldots,d\}$ and $i\in\{1,\ldots,m_n\}$ is given by
$$
\lambda_n:=\left(1+\frac{R}{n}\right)\left[(d-1)\left(1+\frac{R}{n}\right)+1\right]-d\to0\ \mathrm{as}\ n\to\infty.
$$
We now have, for all $n\in\N$,
$$
P=\bigboxplus_{i=1}^{m_n}\big(p^{(1)}(A_{n,i}),\ldots,p^{(d)}(A_{n,i})\big)\boxtimes P_{n,i}\preceq\bigboxplus_{i=1}^{m_n}\big(p^{(1)}(A_{n,i}),\ldots,p^{(d)}(A_{n,i})\big)\boxtimes V_{\lambda_n}.
$$
Using the monotonicity of $\Phi$, the easily verifiable fact that $\Phi(V_\lambda)=\lambda+1$, and \eqref{eq:Gsup}, we have
\begin{align*}
\Phi(P)&\leq\max_{1\leq i\leq m_n}\Phi\big(p^{(1)}(A_{n,i}),\ldots,p^{(d)}(A_{n,i})\big)\cdot\Phi(V_{\lambda_n})\\
&=(\lambda_n+1)\max_{1\leq i\leq m_n}\underbrace{\left(\frac{p^{(1)}(A_{n,i})}{p^{(d)}(A_{n,i})}\right)^{\beta_1}\cdots\left(\frac{p^{(d-1)}(A_{n,i})}{p^{(d)}(A_{n,i})}\right)^{\beta_{d-1}}}_{\leq G}\leq(\lambda_n+1)G.
\end{align*}
Since this evaluation holds for all $n\in\N$ and $\lambda_n\to0$ as $n\to\infty$, we have $\Phi(P)\leq G$. Thus, we have $\Phi(P)=G$.

We still need to show that any map $\Phi$ having the form of the claim is actually monotone. However, we have essentially already proven that, whenever $P\in V_{(X,\mc A)}$ is non-zero and $\lambda\mapsto\mathuline{\alpha}^\lambda$ is the path appearing earlier in this proof, we have
$$
\Phi(P)=\lim_{\lambda\to\infty}\left(\frac{\Phi_{\mathuline{\alpha}^\lambda}(P)}{p^{(d)}(X)}\right)^{\frac{1}{\lambda-1}}.
$$
As the quantities appearing in the limit are monotone, so is $\Phi$ as the pointwise limit.

Let us now prove the final claim. We pick $\Phi\in\Sigma(S^d,\T\R_+^{\rm op})$. We first show that, whenever $a>0$, $\Phi(a,\ldots,a)=1$; this also holds in the $\T\R_+$-case, but that already follows from our earlier work \cite{Farooq_et_al_2024,Verhagen_et_al_2024}. We first show this for a rational $a>0$. When $n\in\N$, we immediately have $\Phi(n,\ldots,n)=\Phi(1,\ldots,1)=1$ just using the tropical additivity of $\Phi$. Moreover, whenever $m\in\N$,
\begin{align*}
1&=\Phi(1,\ldots,1)=\Phi\left((m,\ldots,m)\boxtimes\left(\frac{1}{m},\ldots,\frac{1}{m}\right)\right)\\
&=\Phi(m,\ldots,m)\cdot\Phi\left(\frac{1}{m},\ldots,\frac{1}{m}\right)=\Phi\left(\frac{1}{m},\ldots,\frac{1}{m}\right).
\end{align*}
Now, when $m,n\in\N$,
$$
\Phi\left(\frac{n}{m},\ldots\frac{n}{m}\right)=\Phi(n,\ldots,n)\cdot\Phi\left(\frac{1}{m},\ldots,\frac{1}{m}\right)=1.
$$
Thus, $\Phi(a)=1$ for any rational $a>0$. We next pick any $a>0$ and some rational $p,q>0$, $p<a<q$. We immediately see
$$
(a,\ldots,a)\succeq(a-p,\ldots,a-p)\boxplus(p,\ldots,p)\succeq(a,\ldots,a),
$$
so that $\Phi(a,\ldots,a)=\max\{\Phi(a-p,\ldots,a-p),\Phi(p,\ldots,p)\}\geq\Phi(p,\ldots,p)=1$. On the other hand,
$$
(q,\ldots,q)\succeq(q-a,\ldots,q-a)\boxplus(a,\ldots,a)\succeq(q,\ldots,q),
$$
so that $1=\Phi(q,\ldots,q)=\max\{\Phi(q-a,\ldots,q-a),\Phi(a,\ldots,a)\}\geq\Phi(a,\ldots,a)$. Putting these observations together, we have $\Phi(a,\ldots,a)=1$.

Let $(X,\mc A)$ be standard Borel and consider $t\in(0,1)$, $U=\big(u^{(1)},\ldots,u^{(d)}\big)\in V_{(X,\mc A)}$, and $V=\big(v^{(1)},\ldots,v^{(d)}\big)\in V_{(X,\mc A)}$. We denote
$$
tU+(1-t)V:=\big(tu^{(1)}+(1-t)v^{(1)},\ldots,tu^{(d)}+(1-t)v^{(d)}\big)\in V_{(X,\mc A)}.
$$
It is obvious that $tU+(1-t)V\preceq tU\boxplus(1-t)V$. Thus, using the observation above together with the monotonicity of $\Phi$, we have
\begin{align*}
\Phi\big(tU+(1-t)V\big)&\geq\Phi\big(tU\boxplus(1-t)V\big)\\
&=\max\{\Phi(U)\cdot\Phi(t,\ldots,t),\Phi(V)\cdot\Phi(1-t,\ldots,1-t)\}\\
&=\max\{\Phi(U),\Phi(V)\}.
\end{align*}
This already looks pretty bad for $\Phi$, but let us be thorough.

Let us fix $P=\big(p^{(1)},\ldots,p^{(d)}\big)\in V_{(X,\mc A)}$ for some standard Borel $(X,\mc A)$. Let us assume that $R>0$ is such that $P$ satisfies \eqref{eq:finite}. Let us denote $P_0:=\big(p^{(d)},\ldots,p^{(d)}\big)$. Whenever $0<t<\min\{1,1/R\}$, we may define $U=\big(u^{(1)},\ldots,u^{(d)}\big)\in V_{(X,\mc A)}$ where $u^{(\ell)}=(1-t)^{-1}\big(p^{(\ell)}-tp^{(d)}\big)$ and $V=\big(v^{(1)},\ldots,v^{(d)}\big)\in V_{(X,\mc A)}$ where $v^{(\ell)}=(1-t)^{-1}\big(p^{(d)}-tp^{(\ell)}\big)$. We thus have $P=tP_0+(1-t)U$ and $P_0=tP+(1-t)V$, so that
\begin{align*}
\Phi(P)&=\Phi\big(tP_0+(1-t)U)\geq\max\{\Phi(P_0),\Phi(U)\}\geq\Phi(P_0)\\
&=\Phi\big(tP+(1-t)V\big)\geq\max\{\Phi(P),\Phi(V)\}\geq\Phi(P).
\end{align*}
Hence, we have $\Phi(P)=\Phi(P_0)=1$ since $P_0\succeq\big(p^{(d)}(X),\ldots,p^{(d)}(X)\big)\succeq P_0$ and $\Phi\big(p^{(d)}(X),\ldots,p^{(d)}(X)\big)=1$. Thus, $\Phi(P)=1$ whenever $P\neq0$ and $\Phi(0)=0$, i.e.,\ $\Phi$ is the trivial tropical homomorphism which is degenerate. This proves the final claim.
\end{proof}

For any $\mathuline{\beta}=(\beta_1,\ldots,\beta_d)\in\R^d\setminus\{0\}$ such that $\beta_1+\cdots+\beta_d=0$ and $\beta_k=1$ for one $k$, let us define the function $\Phi^\T_{\mathuline{\beta}}:S^d\to\R_+$ through
\begin{equation}
\Phi^\T_{\mathuline{\beta}}(P)=\underset{x\in X}{P{\rm -ess\,sup}}\,\prod_{\ell:\,\ell\neq k}\frac{dp^{(\ell)}}{dp^{(k)}}(x)^{\beta_\ell}
\end{equation}
for any $P=\big(p^{(1)},\ldots,p^{(d)}\big)\in V_{(X,\mc A)}$ on any standard Borel $(X,\mc A)$. According to Proposition \ref{prop:tropical}, we have
\begin{align*}
\Sigma(S^d,\T\R_+)&=\bigcup_{k=1}^d\{\Phi^\T_{\mathuline{\beta}}\,|\,\beta_1+\cdots+\beta_d=0,\ \beta_k=1,\ \beta_\ell\leq0\ \mathrm{whenever}\ \ell\neq k\},\\
\Sigma(S^d,\T\R_+^{\rm op}\}&=\emptyset.
\end{align*}

\section{Multivariate R\'{e}nyi divergences and their applications in majorization}\label{sec:MRD}

Let us denote by $\mc P^d$ the section of $S^d$ of those $P\in S^d$ such that $\|P\|=(1,\ldots,1)$, i.e.,\ $\mc P^d$ is the set of regular finite statistical experiments. This is naturally the set of actual information-theoretic interest.

\begin{definition}
We say that a map $D:\mc P^d\to\R$ is a {\it (multivariate) divergence} if
\begin{itemize}
\item[(i)] $P\succeq Q$ $\Rightarrow$ $D(P)\geq D(Q)$ (monotonicity) and
\item[(ii)] $D(P\boxtimes Q)=D(P)+D(Q)$ for all $P,Q\in\mc P^d$ (extensivity).
\end{itemize}
We denote the set of divergences $D:\mc P^d\to\R$ by $\mf D(\mc P^d)$.
\end{definition}

Let us investigate a divergence $D:\mc P^d\to\R$. Extensivity of $D$ immediately gives us $D(1,\ldots,1)=0$. Since, for any probability measure $p$, $(p,\ldots,p)\succeq(1,\ldots,1)\succeq(p,\ldots,p)$, we also have $D(p,\ldots,p)=0$ due to monotonicity. Since $P\succeq(1,\ldots,1)$ for all $P\in\mc P^d$, we have $D(P)\geq D(1,\ldots,1)=0$. Thus, in fact, $D:\mc P^d\to\R_+$.

The consistent way of defining divergences on $\mc P^d$ using the monotone homomorphisms and derivations is using a fixed power universal. Let us consider the fixed power universal $U:=V_1$ where $V_\lambda$, $\lambda>0$, are are as in \eqref{eq:Vlambda}. Recall that, due to Proposition \ref{lemma:pu}, these are power universals. Recalling that $\Sigma(S^d,\T\R_+^{\rm op})$ is empty, we define the sets $\mf D(S^d,\mb K)$, $\mb K\in\{\R_+,\R_+^{\rm op},\T\R_+\}$,
\begin{equation}\label{eq:Div}
\mf D(S^d,\mb K):=\left\{\mc P^d\ni P\mapsto\frac{\log{\Phi(P)}}{\log{\Phi(U)}}\,\middle|\,\Phi\in\Sigma(S^d,\mb K)\right\}
\end{equation}
of divergences. The sets of derivations $\mf D^k(S^d)$ when restricted on $\mc P^d$ are already divergences. For $\Phi_{\mathuline{\alpha}}\in\Sigma(S^d,\R_+)\cup\Sigma(S^d,\R_+^{\rm op})$, $\mathuline{\alpha}=(\alpha_1,\ldots,\alpha_d)$, we have
$$
\Phi_{\mathuline{\alpha}}(U)=\frac{1}{d+1}\sum_{\ell=1}^d 2^{\alpha_\ell}.
$$
As a function of $\mathuline{\alpha}$ this is somewhat cumbersome and there does not seem to be other power universals in $\mc P^d$ that would result into anything prettier. This is why we replace the above normalization with the simpler $\max_{1\leq k\leq d}\alpha_k-1$ which as a function of $\mathuline{\alpha}$ shares the important properties with the function $\mathuline{\alpha}\mapsto\log{\Phi_{\mathuline{\alpha}}(U)}$. This prompts the following definition.

\begin{definition}\label{def:MRD}
Let us denote by $A_+\subset\R^d$ the set of those $\mathuline{\alpha}=(\alpha_1,\ldots,\alpha_d)$ such that $0\leq\alpha_\ell<1$ for $\ell=1,\ldots,d$ and $\alpha_1+\cdots+\alpha_d=1$. We also denote, for any $k\in\{1,\ldots,d\}$, by $A_k$ the set of those $\mathuline{\alpha}=(\alpha_1,\ldots,\alpha_d)$ such that $\alpha_k>1$, $\alpha_\ell\leq0$ whenever $\ell\neq k$, and $\alpha_1+\cdots+\alpha_d=1$. We also define $A_-:=A_1\cup\cdots\cup A_d$. For any $\mathuline{\alpha}=(\alpha_1,\ldots,\alpha_d)\in A_+\cup A_-$, we define the {\it temperate multivariate R\'{e}nyi divergence} $D_{\mathuline{\alpha}}:\mc P^d\to\R_+$ through
\begin{align}
D_{\mathuline{\alpha}}(P)&=\frac{1}{\max_{1\leq \ell\leq d}\alpha_\ell-1}\log{\Phi_{\mathuline{\alpha}}(P)}\nonumber\\
&=\frac{1}{\max_{1\leq \ell\leq d}\alpha_\ell-1}\log{\int_X\frac{dp^{(1)}}{dp^{(d)}}(x)^{\alpha_1}\cdots\frac{dp^{(d-1)}}{dp^{(d)}}(x)^{\alpha_{d-1}}\,dp^{(d)}(x)}\label{eq:TemperateDiv}
\end{align}
for any $P=\big(p^{(1)},\ldots,p^{(d)}\big)\in\mc P^d$ on any standard Borel $(X,\mc A)$.

For any $k\in\{1,\ldots,d\}$, let us denote by $B_k$ the set of those $\mathuline{\beta}=(\beta_1,\ldots,\beta_d)\in\R^d$ such that $\beta_k=1$, $\beta_\ell\leq0$ whenever $\ell\neq k$, and $\beta_1+\cdots+\beta_d=0$. We also denote $B:=B_1\cup\cdots\cup B_d$. For any $\mathuline{\beta}=(\beta_1,\ldots,\beta_d)\in B$, we define the {\it tropical multivariate R\'{e}nyi divergence} $D^\T_{\mathuline{\beta}}:\mc P^d\to\R_+$ through
\begin{align}\label{eq:TropicalDiv}
D^\T_{\mathuline{\beta}}(P)&=\log{\Phi^\T_{\mathuline{\beta}}(P)}=\log{\left\{\underset{x\in X}{P{\rm -ess\,sup}}\,\prod_{\ell:\,\ell\neq k}\frac{dp^{(\ell)}}{dp^{(k)}}(x)^{\beta_\ell}\right\}}
\end{align}
for any $P=\big(p^{(1)},\ldots,p^{(d)}\big)\in\mc P^d$ on any standard Borel $(X,\mc A)$.

We denote the set of all of the above temperate and tropical divergences $D_{\mathuline{\alpha}},D^\T_{\mathuline{\beta}}:\mc P^d\to\R$ together with the derivations $\Delta^{(k)}_{\mathuline{\gamma}}$, $k=1,\ldots,d$, $\mathuline{\gamma}=(\gamma_1,\ldots,\gamma_d)\in\R_+^d$, $\sum_{\ell:\,\ell\neq k}\gamma_\ell=1$, of \eqref{eq:GammaDeriv} by $\hat{\mf D}(\mc P^d)$. See also Figure \ref{fig:Restricted} for a visualization in case $d=3$.
\end{definition}

\begin{figure}
\begin{center}
\begin{overpic}[scale=0.45,unit=1mm]{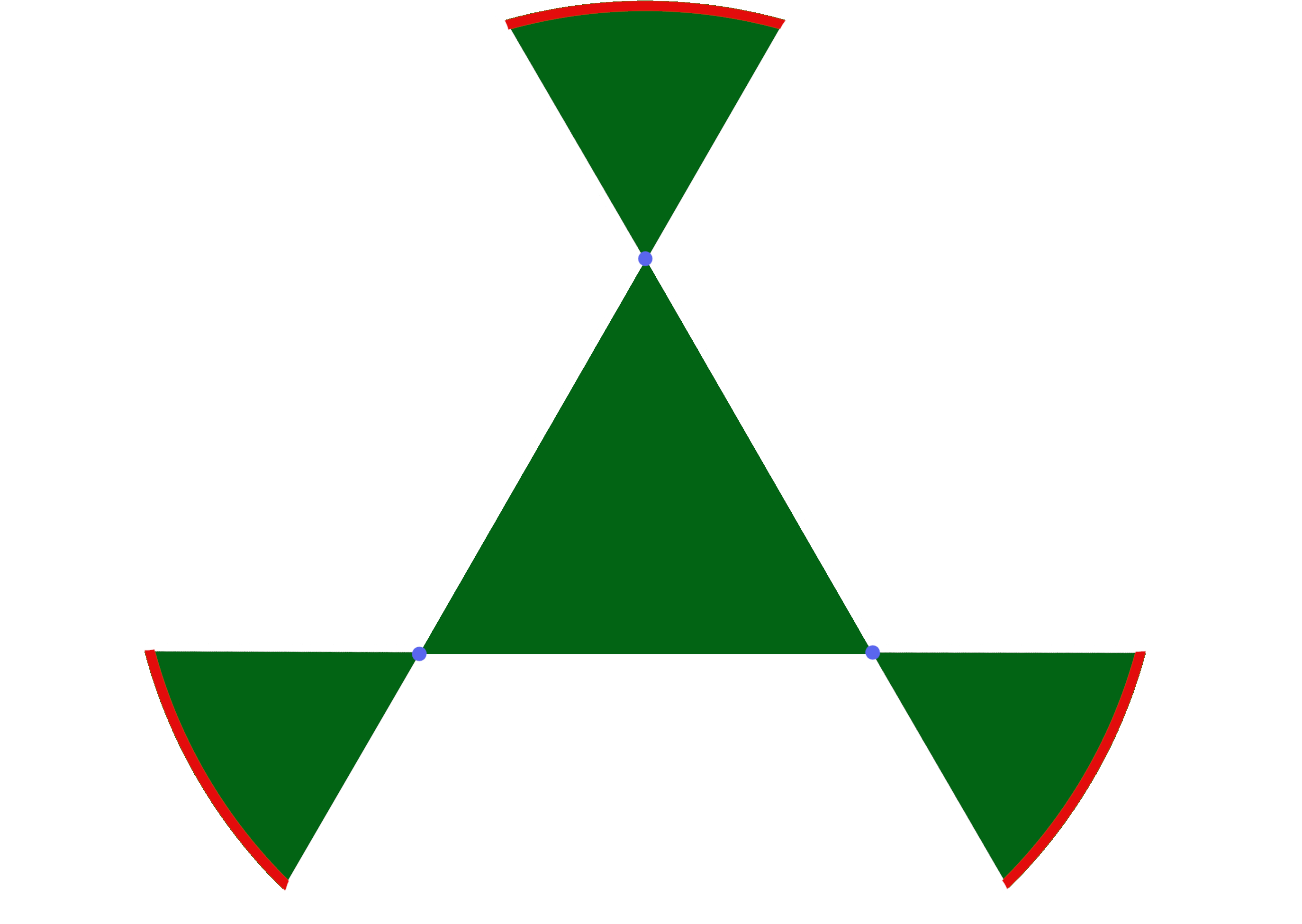}
\put(45,30){\begin{Huge}
${\color{white} A_+}$
\end{Huge}}
\put(17,12){\begin{Large}
${\color{white} A_-}$
\end{Large}}
\put(77,12){\begin{Large}
${\color{white} A_-}$
\end{Large}}
\put(47,62){\begin{Large}
${\color{white} A_-}$
\end{Large}}
\put(31,37){\begin{Large}
\darker{$D_{\mathuline{\alpha}}$}
\end{Large}}
\put(25,7){\begin{Large}
\darker{$D_{\mathuline{\alpha}}$}
\end{Large}}
\put(68,7){\begin{Large}
\darker{$D_{\mathuline{\alpha}}$}
\end{Large}}
\put(37,60){\begin{Large}
\darker{$D_{\mathuline{\alpha}}$}
\end{Large}}
\put(25,25){\begin{large}
\sininen{$\Delta^{(1)}_{\mathuline{\gamma}}$}
\end{large}}
\put(68,25){\begin{large}
\sininen{$\Delta^{(2)}_{\mathuline{\gamma}}$}
\end{large}}
\put(52,52){\begin{large}
\sininen{$\Delta^{(3)}_{\mathuline{\gamma}}$}
\end{large}}
\put(5,8){\begin{Large}
\punainen{$D^\T_{\mathuline{\beta}}$}
\end{Large}}
\put(86,8){\begin{Large}
\punainen{$D^\T_{\mathuline{\beta}}$}
\end{Large}}
\put(62,70){\begin{Large}
\punainen{$D^\T_{\mathuline{\beta}}$}
\end{Large}}
\end{overpic}
\caption{\label{fig:Restricted} The multivariate divergences of Definition \ref{def:MRD} together with the derivations $\Delta^{(k)}_{\mathuline{\gamma}}$ of \eqref{eq:GammaDeriv} depicted in the case $d=3$. The figure is contained in the affine plane of $\R^3$ of those points whose coordinates sum up to 1. The temperate divergences are associated with the areas $A_+$ and $A_-$ and are depicted in green. Note that, e.g.,\ in the lower-left part of $A_-$, we have $\alpha_1>1$ and $\alpha_2,\alpha_3\leq0$. The red arcs represent points at infinity and they correspond to the tropical divergences. The blue points are $e_1=(1,0,0)$, $e_2=(0,1,0)$, and $e_3=(0,0,1)$. The situation depicted in this figure generalizes to the general $d\in\N$ case in a straightforward manner: the parameter area $A_+$ corresponds to the $d$-dimensional probability simplex (without its extreme points) and $A_-$ extends from the extreme points of the probability simplex as cones reaching to infinity.}
\end{center}
\end{figure}

Let us fix $k\in\{1,\ldots,d\}$ and $\mathuline{\beta}=(\beta_1,\ldots,\beta_d)\in B_k$ and define the path
$$
\R_+\ni\lambda\mapsto\mathuline{\alpha}^\lambda=\big((\lambda-1)\beta_1,\ldots,(\lambda-1)\beta_{k-1},\lambda,(\lambda-1)\beta_{k+1},\ldots,(\lambda-1)\beta_d\big)\in\R^d.
$$
We immediately see that, when $0\leq\lambda<1$, $\mathuline{\alpha}^\lambda\in A_+$ and, when $\lambda>1$, $\mathuline{\alpha}^\lambda\in A_k$. Moreover, denoting the smallest of $\beta_\ell$, $\ell\neq k$ by $\beta_{\rm min}$ (so that $\beta_{\rm min}$ has the largest absolute value of all $\beta_\ell$, $\ell\neq k$), we find that $\lambda$ is the largest entry in $\mathuline{\alpha}^\lambda$ if and only if
$$
\lambda\geq\lambda_0:=\frac{\beta_{\rm min}}{\beta_{\rm min}-1}.
$$
Using Jensen's inequality, it is easy to see that
$$
\lambda_0\leq\lambda_1\leq\lambda_2,\ \lambda_1\neq 1\neq\lambda_2\quad\Longrightarrow\quad D_{\mathuline{\alpha}^{\lambda_1}}(P)\leq D_{\mathuline{\alpha}^{\lambda_2}}(P).
$$
Moreover, it is easy to verify that, for any $P=\big(p^{(1)},\ldots,p^{(d)}\big)\in\mc P^d$,
\begin{align*}
\lim_{\lambda\to 1}D_{\mathuline{\alpha}^\lambda}(P)&=-\sum_{\ell:\,\ell\neq k}\beta_\ell D_{\rm KL}\big(p^{(k)}\big\|p^{(\ell)}\big)=\Delta^{(k)}_{-\mathuline{\beta}}(P),\\
\lim_{\lambda\to\infty}D_{\mathuline{\alpha}^\lambda}(P)&=\sup_{\lambda>1}D_{\mathuline{\alpha}^\lambda}(P)=D^\T_{\mathuline{\beta}}(P).
\end{align*}
This shows that we obtain all the derivations and all the tropical divergences as pointwise limits of the temperate divergences.

\subsection{Sufficient conditions for large-sample and catalytic majorization}

With the slight deviation of using a different normalization from the one used in Subsection \ref{subsec:preorderedsemirings}, the temperate and tropical multivariate R\'{e}nyi divergences of Definition \ref{def:MRD} and mixtures of the binary Kullback-Leibler divergences form the test spectrum $\hat{\mf D}(S^d)$ of the majorization semiring $S^d$ which we may identify with the set $\hat{\mf D}(\mc P^d)$ of Definition \ref{def:MRD}. A straightforward application of Theorem \ref{thm:Vergleichsstellensatz} gives us the following result on large-sample and catalytic majorization generalizing the bivariate ($d=2$) result of \cite{Mu_et_al_2021} and the multivariate results on finite sample spaces of \cite{Farooq_et_al_2024}. Note that the experiments $P$ and $Q$ of the following theorem are assumed to be elements of the semiring $S^d$, so condition \eqref{eq:finite} is assumed to holds for both with some $R>0$. Furthermore, conditions in \eqref{eq:derivcond} together with the fact that $D_{\rm KL}(p\|q)>0$ $\Rightarrow$ $p\neq q$ imply that $P=\big(p^{(1)},\ldots,p^{(d)}\big)$ is such that $p^{(k)}\neq p^{(\ell)}$ whenever $k\neq\ell$ so that, according to Proposition \ref{lemma:pu}, $P$ is a power universal.

\begin{theorem}\label{thm:LSCatExact}
Let $P=\big(p^{(1)},\ldots,p^{(d)}\big)\in\mc P^d$ 
and $Q=\big(q^{(1)},\ldots,q^{(d)}\big)\in\mc P^d$. If
\begin{align}
D_{\mathuline{\alpha}}(P)&>D_{\mathuline{\alpha}}(Q),\hspace{62pt}\forall\mathuline{\alpha}\in A_+\cup A_-,\label{eq:tempcond}\\
D^\T_{\mathuline{\beta}}(P)&>D^\T_{\mathuline{\beta}}(Q),\hspace{62pt}\forall\mathuline{\beta}\in B,\label{eq:tropcond}\\
D_{\rm KL}\big(p^{(k)}\big\|p^{(\ell)}\big)&>D_{\rm KL}\big(q^{(k)}\big\|q^{(\ell)}\big),\qquad\forall k,\ell\in\{1,\ldots,d\},\ k\neq\ell,\label{eq:derivcond}
\end{align}
then $P^{\boxtimes n}\succeq Q^{\boxtimes n}$ for $n\in\N$ sufficiently large and there also exists $R\in\mc P^d$ such that $P\boxtimes R\succeq Q\boxtimes R$. In the final part, we may always choose the catalyst $R=\big(r^{(1)},\ldots,r^{(d)}\big)$ according to
$$
r^{(k)}=\frac{1}{n+1}\bigoplus_{m=0}^n \big(p^{(k)}\big)^{\otimes(n-m)}\big(q^{(k)}\big)^{\otimes m},\qquad k=1,\ldots,d,
$$
for $n\in\N$ sufficiently large. On the other hand, if $P^{\boxtimes n}\succeq Q^{\boxtimes n}$ for some $n\in\N$ or $P\boxtimes R\succeq Q\boxtimes R$ for some $R\in\mc P^d$, then the inequalities in \eqref{eq:tempcond}, \eqref{eq:tropcond}, and \eqref{eq:derivcond} hold non-strictly.
\end{theorem}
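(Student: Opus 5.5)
The plan is to apply Theorem \ref{thm:Vergleichsstellensatz} to the semiring $S^d$ with $x=P$ and $y=Q$. The hypotheses on $S^d$ were verified above: it is zerosumfree, a preordered semidomain, of polynomial growth by Proposition \ref{lemma:pu}, and of degeneracy $d$ through $\|\cdot\|$. I will fix the power universal $U=V_1$. Since $P,Q\in\mc P^d$, we have $\|P\|=\|Q\|=(1,\ldots,1)$.

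The key step is translating the conditions \eqref{eq:FritzConditions} on $\hat{\mf D}(S^d)$ into \eqref{eq:tempcond}--\eqref{eq:derivcond}. By Propositions \ref{prop:TemperateDeriv} and \ref{prop:tropical}, the elements of $\hat{\mf D}(S^d)$ have the following forms:
\begin{itemize}
\item In the temperate case, $\log\Phi_{\mathuline{\alpha}}/\log\Phi_{\mathuline{\alpha}}(U)$ with $\mathuline{\alpha}\in A_+\cup A_-$. (If some $\alpha_\ell=1$ in the op case, all other entries vanish and $\Phi_{\mathuline{\alpha}}$ is degenerate, so these are excluded.)
\item In the tropical case, $\log\Phi^\T_{\mathuline{\beta}}/\log\Phi^\T_{\mathuline{\beta}}(U)$ with $\mathuline{\beta}\in B$.
\item The derivations $\Delta^{(k)}_{\mathuline{\gamma}}/\Delta^{(k)}_{\mathuline{\gamma}}(U)$, which restrict to $\Delta^{(k)}_{\mathuline{\gamma}}$ on $\mc P^d$.
\end{itemize}
The point to check is that rescaling each functional by a positive constant does not affect a strict inequality. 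So I must show that the signs of $\log\Phi_{\mathuline{\alpha}}(U)$ and of $\max_\ell\alpha_\ell-1$ agree, and that $\log\Phi^\T_{\mathuline{\beta}}(U)>0$ and $\Delta^{(k)}_{\mathuline{\gamma}}(U)>0$. I will do this through nondegeneracy and monotonicity. Since $U\succeq(1,\ldots,1)$ and $U$ is a power universal, a nondegenerate $\Phi\in\Sigma(S^d,\R_+)$ must satisfy $\Phi(U)>1$: otherwise $\Phi(U)=1$, and from $x\rgeq y\Rightarrow yU^k\rgeq x$ we would get $\Phi(x)=\Phi(y)$ whenever $x\rgeq y$. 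Symmetrically, $\Phi(U)<1$ in the op case. For $\mathuline{\alpha}\in A_-$ we have $\max_\ell\alpha_\ell>1$, and for $\mathuline{\alpha}\in A_+$ we have $\max_\ell\alpha_\ell<1$, so the signs match. The tropical and derivation cases follow the same way; for derivations, $\Delta(U)\geq\Delta(1)=0$, with equality impossible for nonzero $\mathuline{\gamma}$ because $D_{\rm KL}$ between distinct rows of $V_1$ is positive.

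For the derivation condition, \eqref{eq:derivcond} for every pair $k\neq\ell$ implies the inequality for every convex combination $\mathuline{\gamma}$. Hence all conditions \eqref{eq:FritzConditions} hold strictly.

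Next I remove the factor $u^k$. As remarked before the theorem, \eqref{eq:derivcond} forces $p^{(k)}\neq p^{(\ell)}$, so $P$ is a power universal by Proposition \ref{lemma:pu}. Theorem \ref{thm:Vergleichsstellensatz} therefore gives $P^{\boxtimes n}\succeq Q^{\boxtimes n}$ for large $n$, and $P\boxtimes Z\succeq Q\boxtimes Z$ with $Z=\sum_{m=0}^n P^{n-m}Q^m$. The element $Z$ has norm $(n+1,\ldots,n+1)$, so normalizing by the scalar tuple $(1/(n+1),\ldots,1/(n+1))$ preserves $\succeq$ and yields the stated $R\in\mc P^d$, whose components are the displayed direct sums.

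For the converse, Theorem \ref{thm:Vergleichsstellensatz} already gives the non-strict inequalities for the test spectrum elements, possibly after a factor $u^k$; here no such factor appears, and the direct argument is easy anyway. If $P^{\boxtimes n}\succeq Q^{\boxtimes n}$, additivity and monotonicity of each divergence give $nD(P)\geq nD(Q)$. If $P\boxtimes R\succeq Q\boxtimes R$, then $D(P)+D(R)\geq D(Q)+D(R)$, and $D(R)$ is finite because $R$ is regular, so it cancels. The non-strict KL inequalities for single pairs are the special cases with $\gamma$ a unit vector.

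I expect the main obstacle to be the sign and normalization bookkeeping: the paper's normalization $\max_\ell\alpha_\ell-1$ replaces $\log\Phi_{\mathuline{\alpha}}(U)$, and the op case flips orders. I will handle it with the nondegeneracy argument above.
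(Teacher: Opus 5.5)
Your proposal is correct and follows the paper's route. The paper treats the theorem as a straightforward application of Theorem~\ref{thm:Vergleichsstellensatz}: the test spectrum of $S^d$ is identified through Propositions~\ref{prop:TemperateDeriv} and~\ref{prop:tropical}, the factor $u^k$ is dropped because \eqref{eq:derivcond} makes $P$ a power universal, and the catalyst is normalized. Your sign check supplies a detail the paper leaves implicit when it swaps the normalization $\log\Phi_{\mathuline{\alpha}}(U)$ for $\max_\ell\alpha_\ell-1$: you show via nondegeneracy and $U\succeq(1,\ldots,1)$ that $\log\Phi_{\mathuline{\alpha}}(U)>0$ on $A_-$ and $<0$ on $A_+$, so the two normalizations have the same sign.
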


\subsection{Barycentric representations of multivariate divergences}

We next characterize all multivariate divergences $D\in\mf D(\mc P^d)$ following the proof of Theorem 2 of \cite{Mu_et_al_2021}. The cornerstone of the proof is the following simple lemma:

\begin{lemma}\label{lemma:step}
The implication
$$
\forall\Delta\in\hat{\mf D}(\mc P^d):\ \Delta(P)\geq\Delta(Q)\quad\Longrightarrow\quad D(P)\geq D(Q)
$$
holds for any multivariate divergence $D\in\mf D(\mc P^d)$.
\end{lemma}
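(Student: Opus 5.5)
The plan is to follow the strategy of \cite{Mu_et_al_2021}. We perturb $P$ slightly so that the hypotheses hold strictly, apply Theorem \ref{thm:LSCatExact} to obtain large-sample majorization, and then use extensivity and monotonicity of $D$ to pass back to a single copy, letting the perturbation vanish at the end.

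Fix $P,Q\in\mc P^d$ with $\Delta(P)\geq\Delta(Q)$ for all $\Delta\in\hat{\mf D}(\mc P^d)$. Pick an auxiliary experiment $W\in\mc P^d$ for which every element of $\hat{\mf D}(\mc P^d)$ is strictly positive. A natural choice is $W=V_1$, the power universal from Proposition \ref{lemma:pu}, since its measures are pairwise distinct. Then:
\begin{itemize}
\item[(a)] all the KL divergences $D_{\rm KL}(w^{(k)}\|w^{(\ell)})$ with $k\neq\ell$ are positive;
\item[(b)] each $D_{\mathuline{\alpha}}(W)>0$ by strict convexity of $f_{\mathuline{\alpha}}$, or strict concavity in the $A_+$ case, because $W$ is not of the form $(p,\ldots,p)$;
\item[(c)] each $D^\T_{\mathuline{\beta}}(W)>0$, since an essential supremum of a weighted geometric mean of likelihood ratios that are not all $1$ must exceed $1$.
\end{itemize}
Now, for $m,N\in\N$, compare $P^{\boxtimes N}\boxtimes W$ with $Q^{\boxtimes N}$. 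Every $\Delta\in\hat{\mf D}(\mc P^d)$ is additive under $\boxtimes$: the temperate and tropical ones because $\Phi$ is multiplicative, and the KL mixtures directly. Hence $\Delta(P^{\boxtimes N}\boxtimes W)=N\Delta(P)+\Delta(W)>N\Delta(Q)$. Theorem \ref{thm:LSCatExact} then gives some $n$ with $(P^{\boxtimes N}\boxtimes W)^{\boxtimes n}\succeq Q^{\boxtimes Nn}$. Applying monotonicity and extensivity of $D$ yields $n\big(ND(P)+D(W)\big)\geq nND(Q)$, that is, $D(P)+D(W)/N\geq D(Q)$. Letting $N\to\infty$ gives $D(P)\geq D(Q)$, since $D(W)<\infty$ as a real number.

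The main obstacle is verifying that $W$ has strictly positive values uniformly over the whole infinite family $\hat{\mf D}(\mc P^d)$. Pointwise positivity is enough here, because Theorem \ref{thm:LSCatExact} only requires strict inequalities pointwise. I would check each of the three families separately:
\begin{itemize}
\item For temperate $\mathuline{\alpha}$, a Jensen/H\"older argument with equality cases shows $\Phi_{\mathuline{\alpha}}(W)\neq1$ with the correct sign, unless the $\alpha$-weighted product of likelihood ratios is constant. For $V_1$ that constancy would force $\mathuline{\alpha}$ to be a vertex $e_k$, and vertices are excluded from $A_+\cup A_-$.
\item For tropical $\mathuline{\beta}$, positivity holds because the integral of the product equals $1$ at the boundary value, forcing the essential supremum to be strictly larger than $1$.
\end{itemize}
If this verification proves delicate, a fallback is to replace $W$ by a direct check on the explicit finite experiment $V_1$, whose values are computable in closed form.
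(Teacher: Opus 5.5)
Your proof is correct and is essentially the paper's argument. The paper first handles the strict case via Theorem \ref{thm:Vergleichsstellensatz} (obtaining $P^{\boxtimes n}\boxtimes U^{\boxtimes k}\succeq Q^{\boxtimes n}\boxtimes U^{\boxtimes k}$) and then perturbs to $P^{\boxtimes n}\boxtimes U$ versus $Q^{\boxtimes n}$; you fold both steps into one by applying Theorem \ref{thm:LSCatExact} to $P^{\boxtimes N}\boxtimes V_1$ versus $Q^{\boxtimes N}$.

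The paper only asserts $\Delta(V_1)>0$. For that step, rely on the closed forms rather than your general equality-case heuristic, which is inaccurate as stated. For instance, when $d\geq 3$, $D_{\mathuline{\alpha}}$ with $\mathuline{\alpha}=(\tfrac12,\tfrac12,0,\ldots,0)$ vanishes on $(p,p,q,\ldots,q)$ with $q\neq p$. The closed forms are $\Phi_{\mathuline{\alpha}}(V_1)=\frac{1}{d+1}\sum_{\ell=1}^d 2^{\alpha_\ell}$ and $\Phi^\T_{\mathuline{\beta}}(V_1)=2$. For the first, compare with $2^t\leq 1+t$ on $[0,1]$ (equality only at the endpoints), $2^t>1+t$ for $t>1$, and $2^t\geq 1+t$ for $t\leq 0$.
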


\begin{proof}
Let us fix $D\in\mf D(\mc P^d)$. Let us first assume that $P,Q\in\mc P^d$ are such that $\Delta(P)>\Delta(Q)$ for all $\Delta\in\hat{\mf D}(\mc P^d)$. Using Theorem \ref{thm:Vergleichsstellensatz} and fixing a power universal $U\in\mc P^d$ (see Proposition \ref{lemma:pu}), we find $n,k\in\N$ such that $P^{\boxtimes n}\boxtimes U^{\boxtimes k}\succeq Q^{\boxtimes n}\boxtimes U^{\boxtimes k}$. Thus, we have
$$
nD(P)+kD(U)=D\big(P^{\boxtimes n}\boxtimes U^{\boxtimes k}\big)\geq D\big(Q^{\boxtimes n}\boxtimes U^{\boxtimes k}\big)=nD(Q)+kD(U),
$$
so that $D(P)\geq D(Q)$. Let us next assume that $P,Q\in\mc P^d$ are such that $\Delta(P)\geq\Delta(Q)$ for all $\Delta\in\hat{\mf D}(\mc P^d)$. We make these inequalities strict by using powers of $P$ and $Q$ and the power universal $U$: for all $n\in\N$
$$
\Delta\big(P^{\boxtimes n}\boxtimes U\big)=n\Delta(P)+\underbrace{\Delta(U)}_{>0}>n\Delta(P)\geq n\Delta(Q)=\Delta\big(Q^{\boxtimes n}\big)
$$
for all $\Delta\in\hat{\mf D}(\mc P^d)$. Thus, according to the first part of this proof, we have
$$
nD(P)+D(U)=D\big(P^{\boxtimes n}\boxtimes U\big)\geq D\big(Q^{\boxtimes n}\big)=nD(Q)
$$
for all $n\in\N$, i.e.,\ $D(P)+D(U)/n\geq D(Q)$ for all $n\in\N$. Thus, $D(P)\geq D(Q)$.
\end{proof}

Using the above lemma in the same way as in the proof of Theorem 2 of \cite{Mu_et_al_2021} (see the proof of Theorem 7 of \cite{haapasalo2025} for details), we may show that any $D\in\mf D(\mc P^d)$ can be expressed as a barycentre of a finite positive measure on the compactification of the set $A_+\cup A_-$. This compactification is to be viewed as $\hat{\mf D}(\mc P^d)$ which is a compact Hausdorff space in the topology of pointwise convergence \cite{FritzII}. This means that, instead of a single point included at the limits $\mathuline{\alpha}\to e_k$ (a natural basis vector of $\R^d$), we have the full $(d-1)$-dimensional probability simplex corresponding to the derivations $\Delta^{(k)}_{\mathuline{\gamma}}$ where $\mathuline{\gamma}=(\gamma_1,\ldots,\gamma_d)\in\R_+^d$, $\sum_{\ell:\,\ell\neq k}\gamma_\ell=1$. Naturally, these simplexes are spanned by the $d-1$ extreme points giving rise to (possible) point measure contributions from the pairwise Kullback-Leibler relative entropies. The pointwise limits at the infinity coincide with the tropical divergences $D^\T_{\mathuline{\beta}}$, $\mathuline{\beta}\in B$. In this setting, using Lemma \ref{lemma:step}, we may define a positive linear functional $H$ on the subspace of the Banach space of continuous functions on the compact Hausdorff space $\hat{\mf D}(\mc P^d)$ spanned by the evaluation functions ${\rm ev}_P$ at a statistical experiment $P$, ${\rm ev}_P(\Delta)=\Delta(P)$ for all $\Delta\in\hat{\mf D}(\mc P^d)$, through $H({\rm ev}_P)=D(P)$. Using a theorem due to Kantorovich \cite{Kantorovich} related to the Hahn-Banach theorem, we may extend $H$ into a positive linear functional $I$ defined on the whole space of continuous functions on $\hat{\mf D}(\mc P^d)$. According to the Riesz-Markov-Kakutani theorem, there is a(n inner and outer regular) finite measure $\mu:\mc B\big(\hat{\mf D}(\mc P^d)\big)\to\R_+$ (where the Borel $\sigma$-algebra $\mc B\big(\hat{\mf D}(\mc P^d)\big)$ is defined w.r.t.\ the topology of pointwise convergence) such that $I(f)=\int f\,d\mu$. This means that we arrive at the following:

\begin{theorem}\label{thm:barycentre}
We shall denote by $\mc B(C)$ the Borel (or Lebesgue) $\sigma$-algebra of any measurable subset $C$ of $\R^m$ for any $m\in\N$. Suppose that $D\in\mf D(S^d)$ is a multivariate divergence. There are finite measures $\mu:\mc B(A_+\cup A_-)\to\R_+$ and $\nu:\mc B(B)\to\R_+$ and $\gamma_{k,\ell}\geq0$, $k,\ell=1,\ldots,d$ such that
$$
D(P)=\int_{A_+\cup A_-}D_{\mathuline{\alpha}}(P)\,d\mu(\mathuline{\alpha})+\int_B D^\T_{\mathuline{\beta}}(P)\,d\nu(\mathuline{\beta})+\sum_{k,\ell=1}^d\gamma_{k,\ell}D_{\rm KL}\big(p^{(k)}\big\|p^{(\ell)}\big)
$$
for all $P=\big(p^{(1)},\ldots,p^{(d)}\big)\in\mc P^d$.
\end{theorem}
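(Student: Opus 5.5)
The plan follows the functional-analytic route sketched just before the statement, made precise in four steps.

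\emph{Step 1 (the compact space).} Equip $\hat{\mf D}(\mc P^d)$ with the topology of pointwise convergence on $\mc P^d$. I would argue that it is compact Hausdorff, either by citing \cite{FritzII}, where the test spectrum is compact, or directly from the parametrization. The temperate part is parametrized by $A_+\cup A_-$. Near the vertices $e_k$, the limits along rays $\mathuline{\alpha}^\lambda$ with $\lambda\to1$ give exactly the derivations $\Delta^{(k)}_{\mathuline{\gamma}}$, with normalized $\mathuline{\gamma}$ ranging over a simplex. At infinity, the limits along rays give the tropical $D^\T_{\mathuline{\beta}}$ with $\mathuline{\beta}\in B$. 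For each fixed $P$ the map $\Delta\mapsto\Delta(P)$ is continuous on this compactification, because of the boundedness condition \eqref{eq:finite} and dominated convergence. In particular every evaluation ${\rm ev}_P$, ${\rm ev}_P(\Delta)=\Delta(P)$, lies in $C(\hat{\mf D}(\mc P^d))$.

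\emph{Step 2 (the functional on evaluations).} Let $W$ be the set of evaluation functions ${\rm ev}_P$. Extensivity of every element of $\hat{\mf D}(\mc P^d)$ gives ${\rm ev}_P+{\rm ev}_Q={\rm ev}_{P\boxtimes Q}$, so $W$ is an additive cone; it contains $0={\rm ev}_{(1,\ldots,1)}$. Let $L$ be its linear span, so each element of $L$ has the form $a\,{\rm ev}_P-b\,{\rm ev}_Q$.
\begin{itemize}
\item[(a)] \emph{Rational coefficients.} A rational combination $\frac{m}{N}{\rm ev}_P-\frac{n}{N}{\rm ev}_Q$ equals $\frac1N({\rm ev}_{P^{\boxtimes m}}-{\rm ev}_{Q^{\boxtimes n}})$. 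Set $H$ of it to be $\frac1N(mD(P)-nD(Q))$. This is well defined and positive: if ${\rm ev}_{P^{\boxtimes m}}\geq{\rm ev}_{Q^{\boxtimes n}}$ pointwise, Lemma \ref{lemma:step} gives $D(P^{\boxtimes m})\geq D(Q^{\boxtimes n})$. Applying this in both directions to equal functions shows $H$ depends only on the function.
\item[(b)] \emph{Real coefficients.} Here I would use continuity. Lemma \ref{lemma:step} together with a power universal $U$ (for which $\Delta(U)>0$ for all $\Delta$, and $\min_\Delta\Delta(U)>0$ by compactness) yields a bound $|H(f)|\leq C\|f\|_\infty$ on the rational span. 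Indeed, $f\leq\varepsilon\,{\rm ev}_U$ holds whenever $\|f\|_\infty\leq\varepsilon\min_\Delta\Delta(U)$. So $H$ extends by density to a positive linear functional on $L$.
\end{itemize}
I expect step (b), passing from rational to real coefficients while keeping positivity, to be the main technical obstacle. If it becomes awkward, I would instead follow the proof of Theorem 7 of \cite{haapasalo2025} verbatim. That route works with the $\mb Q$-span and uses the order unit ${\rm ev}_U$ in Kantorovich's theorem directly.

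\emph{Step 3 (extension and representation).} Since $L$ contains the strictly positive function ${\rm ev}_U$, every continuous function is dominated by a multiple of it. Kantorovich's extension theorem \cite{Kantorovich} therefore gives a positive linear extension $I$ of $H$ to $C(\hat{\mf D}(\mc P^d))$. Riesz--Markov--Kakutani then gives a finite regular Borel measure $\mu_0$ with $D(P)=\int {\rm ev}_P\,d\mu_0$ for all $P$.

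\emph{Step 4 (decomposition).} Split $\mu_0$ along the three pieces of $\hat{\mf D}(\mc P^d)$: the temperate part, identified with $A_+\cup A_-$ with its own normalization; the tropical part, identified with $B$; and the $d$ derivation simplices. Pushing these pieces forward gives $\mu$ and $\nu$, after checking that the identifications are Borel isomorphisms. On each derivation simplex the integrand is affine in $\mathuline{\gamma}$, since $\Delta^{(k)}_{\mathuline{\gamma}}(P)=\sum_{\ell\neq k}\gamma_\ell D_{\rm KL}(p^{(k)}\|p^{(\ell)})$. Hence the integral collapses to $\sum_\ell\gamma_{k,\ell}D_{\rm KL}(p^{(k)}\|p^{(\ell)})$, where $\gamma_{k,\ell}=\int\gamma_\ell\,d\mu_0$ over the $k$-th simplex; these are non-negative. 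This yields the stated formula.
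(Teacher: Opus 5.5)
Your proposal is correct and follows essentially the same route as the paper. Lemma~\ref{lemma:step} makes $H({\rm ev}_P)=D(P)$ a well-defined positive functional on the span of the evaluation functions over the compact test spectrum $\hat{\mf D}(\mc P^d)$; this is then extended by Kantorovich's theorem, represented via Riesz--Markov--Kakutani, and the measure is split into temperate, tropical and derivation parts. Your passage from rational to real coefficients using the order unit ${\rm ev}_U$, and your collapse of the affine derivation simplices into the coefficients $\gamma_{k,\ell}$, fill in details that the paper leaves to \cite{haapasalo2025}.
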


\begin{remark}
Let us highlight that this result has also been previously independently proven in \cite{balsubramani2026}, therein as Theorem 5.1. This goes to show that same results can be reached with greatly different methods. The biggest difference between the approaches taken in this paper and in \cite{balsubramani2026} is that, in this paper, Theorem \ref{thm:barycentre} is a consequence of Theorem \ref{thm:LSCatExact} proven using real-algebraic techniques and the results of \cite{haapasalo2025} whereas, in \cite{balsubramani2026}, the methodology similar with \cite{haapasalo2025} is used, but the preceding proof techniques are different \cite{balsubramani2026a}.
\end{remark}

\subsection{Optimal achievable rates}\label{subsec:rates}

We now derive the form of optimal transformation rate between finite statistical experiments. However, we start in the more general setting of Subsection \ref{subsec:preorderedsemirings}. We let $S$ be a zerosumfree preordered semidomain of polynomial growth and degeneracy $d$ for some $d\in\N$. Fixing a power universal $u\in S$, we define the test spectrum $\hat{\mf D}(S)$. Let us consider $x,y\in S$, $x\sim y$, where $x$ is a power universal, so that $x\sim 1$. Thus, we have $1\sim x\sim y$, i.e.,\ $\|x\|=\|y\|=(1,\ldots,1)$. We say that $r\geq0$ is {\it $(x,y)$-achievable} if there is a sequence $(m_n)_{n=1}^\infty\in\N^\N$ of natural numbers such that $r\leq\liminf_{n\to\infty}m_n/n$ and $x^n\rgeq y^{m_n}$ for all $n\in\N$ sufficiently large. The {\it optimal achievable rate of transformation} $x\to y$ is given by
$$
r(x\to y):=\sup\{r\geq0\,|\, r\ \mathrm{is}\ (x,y)\mathrm{-achievable}\}.
$$
The following result is Corollary 29 of \cite{Verhagen_et_al_2024}.

\begin{corollary}\label{cor:rates}
Let $x,y\in S$, $x\sim y$, where $x$ is a power universal. We have
\begin{equation}\label{eq:ratemin}
r(x\to y)=\min_{\Delta\in\hat{\mf D}(S)}\frac{\Delta(x)}{\Delta(y)}.
\end{equation}
\end{corollary}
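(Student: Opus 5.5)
We prove the two inequalities separately, working in the general setting of Subsection~\ref{subsec:preorderedsemirings} with Theorem~\ref{thm:Vergleichsstellensatz} as the main tool. Since $x$ is a power universal with $\|x\|=(1,\ldots,1)$, every $\Delta\in\hat{\mf D}(S)$ has $\Delta(x)>0$. For the tropical and temperate elements $\log\Phi(\cdot)/\log\Phi(u)$, nondegeneracy together with $x^ku\rgeq \ldots$ gives $\Delta(x)>0$. For the derivations, normalized by $\Delta(u)=1$, the same comparison with $u$ applies. We also have $\Delta(y)\geq 0$, because $y\sim 1$ and $x^k\rgeq y$ for some $k$. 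On the compact space $\hat{\mf D}(S)$ (compact in the topology of pointwise convergence, \cite{FritzII}), $\Delta\mapsto\Delta(x)/\Delta(y)\in(0,\infty]$ is therefore lower semicontinuous, so the infimum is attained. This justifies writing ``$\min$''. Denote the minimum by $r^*$.

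\emph{Upper bound.} Let $r$ be $(x,y)$-achievable with sequence $(m_n)$. The test spectrum elements are monotone and additive under products on the section $\|\cdot\|=(1,\ldots,1)$: homomorphisms become $\log$-additive, and derivations at $\|\cdot\|_{(k)}$ are additive on norm-one elements by the Leibniz rule. Hence $x^n\rgeq y^{m_n}$ gives $n\Delta(x)\geq m_n\Delta(y)$ for every $\Delta$. Dividing by $n$ and taking $\liminf$ yields $\Delta(x)\geq r\Delta(y)$, so $r\leq r^*$.

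\emph{Lower bound.} Fix rational $p/q<r^*$ with $p,q\in\N$. Then $\Delta(x^q)=q\Delta(x)>p\Delta(y)=\Delta(y^p)$ for all $\Delta$ with $\Delta(y)>0$. If $\Delta(y)=0$, the inequality still holds strictly because $\Delta(x)>0$. Since $\|x^q\|=\|y^p\|$ and $x^q$ is a power universal, Theorem~\ref{thm:Vergleichsstellensatz}(a), with the $u^k$ factor omitted, yields $N$ such that $x^{qN'}\rgeq y^{pN'}$ for all $N'\geq N$. For general $n$, write $n=qN'+s$ with $0\leq s<q$ and set $m_n=pN'$. Then $x^n=x^{qN'}x^s\rgeq y^{pN'}x^s$. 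To discard $x^s$, use $x^s\rgeq 1$, which holds because $x\rgeq 1$ via the trivial Markov kernel on $\mc P^d$; in the abstract setting one would instead absorb $x^s$ by the polynomial-growth slack. Therefore $x^n\rgeq y^{m_n}$ with $m_n/n\to p/q$, so $p/q$ is achievable. Letting $p/q\uparrow r^*$ gives $r(x\to y)\geq r^*$. If $r^*=\infty$, which requires $\Delta(y)=0$ for all $\Delta$, the same argument works for every rational.

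The main obstacle is the boundary behaviour. The first point is ensuring $\Delta(x)>0$ uniformly, which is needed so that the lower semicontinuity and attainment of the minimum hold on the compactified spectrum. The second is handling the leftover factor $x^s$ in the abstract setting; there I would first try to show $x\rgeq 1$ follows from $x\sim 1$ plus power universality, and otherwise restrict $n$ to multiples of $q$ and accept the slightly weaker subsequence formulation.
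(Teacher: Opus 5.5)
The paper gives no proof of this corollary; it quotes Corollary 29 of \cite{Verhagen_et_al_2024}. Your architecture is the natural one: monotonicity and additivity of the spectrum on the normalized section give $r(x\to y)\le r^*$, the Vergleichsstellensatz applied to $(x^q,y^p)$ gives the converse, and compactness gives attainment. (Compactness should be taken for the restrictions to the section $\|\cdot\|=(1,\ldots,1)$, as the paper does for $\hat{\mf D}(\mc P^d)$; on all of $S\setminus\{0\}$ the spectrum is not even pointwise bounded.) For $S^d$, where the corollary is actually used, the argument works after one repair.

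That repair concerns $\Delta(y)\ge0$, and your justification for it is wrong: $x^k\rgeq y$ only gives the upper bound $\Delta(y)\le k\Delta(x)$. What you need is $y\rgeq1$, which yields $\Delta(y)\ge\Delta(1)=0$. In $\mc P^d$ this holds via the trivial Markov kernel onto the one-point space. It is not among the abstract hypotheses, though: from $y\sim1$ you only get relations like $yx^{k'}\rgeq1$, i.e.\ $\Delta(y)\ge-k'\Delta(x)$. You use nonnegativity twice, for the lower semicontinuity behind ``$\min$'' and for passing to the $\liminf$ in the upper bound. Moreover, if some $\Delta(y)<0$, the right-hand side of \eqref{eq:ratemin} would be negative. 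So in the abstract setting you must invoke $y\rgeq1$ explicitly as a tacit assumption of the statement.

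The genuine gap is the abstract lower bound. Discarding $x^s$ uses $x\rgeq1$, and so does your unproved claim that $x^q$ is a power universal. This is again a feature of $\mc P^d$, and the paper's definition of power universality does not provide it. Your fallback of restricting to $n\in q\N$ does not work either, because achievability requires $x^n\rgeq y^{m_n}$ for \emph{all} sufficiently large $n$.

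The fix is short. Applying the definition of power universality to $1\rgeq1$ gives $x^{k_0}\rgeq1$ for some $k_0\in\N$. Choose the approximating rationals $p/q<r^*$ with $\gcd(q,k_0)=1$, e.g.\ $q$ a prime larger than $k_0$.
\begin{itemize}
\item $x^q$ is then power universal: from $bx^k\rgeq a$ you get $bx^{k+jk_0}\rgeq bx^k\rgeq a$ for every $j\ge0$, and some $j<q$ satisfies $q\mid k+jk_0$.
\item Every sufficiently large $n$ can be written as $n=qN'+k_0j$ with $0\le j<q$ and $N'\ge N$.
\item Hence $x^n=x^{qN'}(x^{k_0})^j\rgeq x^{qN'}\rgeq y^{pN'}$, and with $m_n:=pN'$ you get $m_n/n\to p/q$, as required.
\end{itemize}
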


Let us now concentrate on multivariate majorization. Given $P,Q\in\mc P^d$, the optimal rate $r(P\to Q)$ corresponds to the optimal ratio of transforming copies of $P$ into copies of $Q$ with a single stochastic map, i.e.,\ a Markov kernel. Recalling that the total set $\hat{\mf D}(\mc P^d)$ (which corresponds to the test spectrum $\hat{\mf D}(S^d)$) of multivariate R\'{e}nyi divergences is the closure (w.r.t.\ the topology of pointwise convergence) of the set of $D_{\mathuline{\alpha}}$ with $\mathuline{\alpha}\in A_+\cup A_-$, we immediately obtain the following result as a consequence of Corollary \ref{cor:rates}.

\begin{corollary}
Suppose that $P,Q\in\mc P^d$ and that the probability measures $p^{(k)}$ appearing in $P$ are pairwise distinct. The optimal transformation rate of copies of $P$ into copies of $Q$ is given by
$$
r(P\to Q)=\min_{\Delta\in\hat{\mf D}(\mc P^d)}\frac{\Delta(P)}{\Delta(Q)}=\inf_{\mathuline{\alpha}\in A_+\cup A_-}\frac{D_{\mathuline{\alpha}}(P)}{D_{\mathuline{\alpha}}(Q)}.
$$
\end{corollary}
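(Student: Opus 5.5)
The first equality is a direct application of Corollary \ref{cor:rates} to the majorization semiring $S^d$, and the second comes from a density/limit argument inside the test spectrum. I will carry these out in that order.

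\emph{Step 1: the first equality.} I check the hypotheses of Corollary \ref{cor:rates} with $S=S^d$, $x=P$ and $y=Q$.
\begin{itemize}
\item Since the $p^{(k)}$ are pairwise distinct, Proposition \ref{lemma:pu} shows that $P$ is a power universal.
\item Since $\|P\|=\|Q\|=(1,\ldots,1)$, the discussion after Definition \ref{def:deg} gives $P\sim Q$.
\end{itemize}
Corollary \ref{cor:rates} then yields $r(P\to Q)=\min_{\Delta}\Delta(P)/\Delta(Q)$, with $\Delta$ ranging over the test spectrum $\hat{\mf D}(S^d)$.

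Next I identify $\hat{\mf D}(S^d)$ with $\hat{\mf D}(\mc P^d)$. By Propositions \ref{prop:TemperateDeriv} and \ref{prop:tropical}, the homomorphisms and derivations are exactly $\Phi_{\mathuline{\alpha}}$, $\Phi^\T_{\mathuline{\beta}}$ and $\Delta^{(k)}_{\mathuline{\gamma}}$. The only discrepancy is normalization. The test spectrum uses $\log\Phi(P)/\log\Phi(U)$, whereas $D_{\mathuline{\alpha}}$ uses $1/(\max_\ell\alpha_\ell-1)$; the derivations are normalized by $\Delta(U)=1$ rather than $\sum_\ell\gamma_\ell=1$. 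In each case the two versions differ by a positive constant depending only on the parameter. Positivity holds because $\log\Phi_{\mathuline{\alpha}}(U)$ and $\max_\ell\alpha_\ell-1$ have the same sign on $A_+$ and on $A_-$, as monotonicity and $U\succeq 1$ show. A common positive factor cancels in the ratio $\Delta(P)/\Delta(Q)$, so the minimum may be taken over $\hat{\mf D}(\mc P^d)$.

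\emph{Step 2: replacing the minimum by the infimum over temperate divergences.} The inequality $\min\le\inf$ is trivial, since every $D_{\mathuline{\alpha}}$ with $\mathuline{\alpha}\in A_+\cup A_-$ lies in $\hat{\mf D}(\mc P^d)$. For the reverse inequality, I show that any tropical divergence or derivation $\Delta$ is a pointwise limit of temperate $D_{\mathuline{\alpha}}$'s.
\begin{itemize}
\item Tropical divergences $D^\T_{\mathuline{\beta}}$ arise as $\lambda\to\infty$ along the path $\mathuline{\alpha}^\lambda$, as shown just after Definition \ref{def:MRD}.
\item Derivations with $\mathuline{\gamma}=-\mathuline{\beta}$ arise as $\lambda\to1$ along the same path. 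These cover all $\mathuline{\gamma}$ with $\sum_{\ell\neq k}\gamma_\ell=1$.
\end{itemize}
Pointwise convergence at the two points $P$ and $Q$ gives convergence of the ratios $D_{\mathuline{\alpha}^\lambda}(P)/D_{\mathuline{\alpha}^\lambda}(Q)\to\Delta(P)/\Delta(Q)$, provided $\Delta(Q)>0$. Hence the infimum over temperate parameters is at most every value $\Delta(P)/\Delta(Q)$, and in particular at most the minimum.

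\emph{Main obstacle: degenerate denominators.} The delicate point is the convention for ratios when $\Delta(Q)=0$, which happens for instance if $Q$ has repeated measures. I adopt the convention of Corollary \ref{cor:rates} from \cite{Verhagen_et_al_2024}: $\Delta(P)/0=+\infty$. This is legitimate because $\Delta(P)>0$ for every $\Delta$ when $P$ is a power universal. With this convention, a limit point $\Delta$ with $\Delta(Q)=0$ contributes $+\infty$ to the minimum, and nearby temperate ratios either blow up or are bounded below by a positive quantity. In both cases these do not affect the inequality $\inf\le\min$, because the minimum is attained at some $\Delta$. If that $\Delta$ has $\Delta(Q)>0$, the approximation argument of Step 2 applies. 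If it has $\Delta(Q)=0$, then every ratio is $+\infty$ and the infimum is trivially $+\infty$ as well.

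Finally, I must check that the approximating temperate $D_{\mathuline{\alpha}^\lambda}(Q)$ are themselves nonzero, or else interpret their ratios by the same convention. This only changes finitely many degenerate cases, so the identity $r(P\to Q)=\inf_{\mathuline{\alpha}\in A_+\cup A_-}D_{\mathuline{\alpha}}(P)/D_{\mathuline{\alpha}}(Q)$ follows.
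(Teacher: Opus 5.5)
Your proposal is correct and follows the paper's route. You apply Corollary \ref{cor:rates} to $S^d$, with $P$ a power universal by Proposition \ref{lemma:pu}, note that the normalizations of $\hat{\mf D}(S^d)$ and $\hat{\mf D}(\mc P^d)$ differ by nonzero constants that cancel in $\Delta(P)/\Delta(Q)$, and then pass from the minimum to the infimum over $A_+\cup A_-$ because the tropical divergences and derivations are pointwise limits of $D_{\mathuline{\alpha}^\lambda}$. Your closing remark about ``finitely many degenerate cases'' is not needed: when $\Delta(Q)>0$, the convergence $D_{\mathuline{\alpha}^\lambda}(Q)\to\Delta(Q)$ already makes the approximants eventually nonzero.
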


\section{Conclusions}\label{sec:concl}

We have derived sufficient and almost necessary conditions for large-sample and catalytic majorization of finite statistical experiments using real-algebraic methods, highlighting the power of the theory of asymptotic spectra also in the non-discrete setting of the current work. The multivariate R\'{e}nyi divergences characterizing large-sample majorization also determine all the general multivariate divergences through barycentres. Thus, we have generalized the work done in the continuous bivariate case \cite{Mu_et_al_2021} and the results on multivariate matrix majorization \cite{Farooq_et_al_2024}.

In this work, we have concentrated on the case where all the probability measures within the same statistical experiment are mutually dominating, i.e.,\ belong to the same measure class. However, it is also possible to study more general support conditions in the same way as in \cite{Verhagen_et_al_2024}. Let us highlight two special cases of support conditions within the same statistical experiment $P=\big(p^{(1)},\ldots,p^{(d)}\big)$ over some standard Borel measurable space $(X,\mc A)$:
\begin{itemize}
\item For all $k\in\{1,\ldots,d\}$, $p^{(k)}=p^{(k)}_\parallel+p^{(k)}_\perp$ where the two measures are orthogonal and $p^{(k)}_\parallel\ll p^{(\ell)}_\parallel$ for all $k,\ell\in\{1,\ldots,d\}$. This case corresponds to the minimal restrictions semiring studied in \cite{Verhagen_et_al_2024}.
\item For all $k\in\{1,\ldots,d-1\}$, $p^{(k)}\ll p^{(d)}$. This case corresponds to the dominating column semiring of \cite{Verhagen_et_al_2024}. In this case, it seems likely that we have to require
$$
\underset{x\in X}{p^{(d)}\mathrm{-ess\,sup}}\,\frac{dp^{(k)}}{dp^{(d)}}(x)<\infty,\qquad k=1,\ldots,d-1,
$$
taken into account the results of \cite{Verhagen_et_al_2024} and the methods that we have used in this work.
\end{itemize}
In \cite{Verhagen_et_al_2024}, the power universals were exhaustively characterized in the above two cases (and were conjectured in a variety of other cases). We anticipate that similar characterizations are possible also in the continuous case and they should lead us to proper continuous generalizations of the characterization of large-sample matrix majorization with varying support conditions.

Our main result, Theorem \ref{thm:LSCatExact} requires the boundedness condition of \eqref{eq:finite} to hold both for the input experiment $P$ and the output experiment $Q$. Since this condition is quite restrictive, it would be desirable to have a less strict condition. Let us point out that the condition of \eqref{eq:finite} for $P$ simply means that the tropical divergences $D^\T_{\mathuline{\beta}}(P)$ are well defined for $\mathuline{\beta}=(\beta_1,\ldots,\beta_d)\in B$ such that $\beta_k=1$, $\beta_\ell=-1$, and $\beta_m=0$ otherwise. This immediately means that all the tropical divergences are well defined. In other words, the condition in \eqref{eq:finite} just means that the tropical divergences are well defined and we need this property to state the conditions \eqref{eq:tropcond} among the sufficient conditions for large-sample and catalytic majorization in Theorem \ref{thm:LSCatExact}. To lift the boundedness condition in \eqref{eq:finite}, it thus seems that we have to discuss a form of majorization where the tropical conditions do not play a role. In vector majorization, we know that the counterpart of the temperate conditions are enough to characterize catalytic majorization \cite{klimesh2007inequalities} whereas the counterparts of tropical conditions are required when studying large-sample majorization \cite[Corollary 46]{Farooq_et_al_2024}. Thus, it might be possible to drop the condition \eqref{eq:finite} or at least replace it with something less stringent if we only want to characterize catalytic multivariate majorization. Another possibility is studying asymptotic majorization in stead. Asymptotic large-sample majorization of a statistical experiment $P=\big(p^{(1)},\ldots,p^{(d)}\big)$ over $Q=\big(q^{(1)},\ldots,q^{(d)}\big)$ means that, for any $\varepsilon>0$, there exists a statistical experiment $Q_\varepsilon=\big(q^{(1)}_\varepsilon,\ldots,q^{(d)}_\varepsilon\big)$ such that $\big\|q^{(k)}-q^{(k)}_\varepsilon\big\|_{\rm TV}<\varepsilon$ for $k=1,\ldots,d$ (i.e.,\ $Q_\varepsilon$ is $\varepsilon$-close to $Q$ in total variation distance) such that $P$ majorizes $Q_\varepsilon$ in large samples, i.e.,\ $P^{\boxtimes n}\succeq Q^{\boxtimes n}_\varepsilon$ for any $n\in\N$ large enough. As the tropical divergences are pointwise limits of the temperate quantities, it seems likely that some form of asymptotic majorization only requires non-strict versions of the conditions \eqref{eq:tempcond}, suggesting that something weaker than \eqref{eq:finite} is required. However, we leave this for future investigation.

\section*{Acknowledgements}

The author would like to thank Frits Verhagen, Marco Tomamichel, Mil\'{a}n Mosonyi, and P\'{e}ter Vrana for stimulating discussions. The author acknowledges support from the National Research Foundation Investigatorship Award (NRF-NRFI10-2024-0006) and from the National Research Foundation, Singapore through the National Quantum Office, hosted in A*STAR, under its Centre for Quantum Technologies Funding Initiative (S24Q2d0009)

\bibliographystyle{ultimate}
\bibliography{bibliography}

\end{document}